\setlist[itemize]{leftmargin=25pt}
\setlist[enumerate]{leftmargin=25pt}
\newtheorem{theorem}{Theorem}
\newtheorem{corollary}[theorem]{Corollary}
\newtheorem{lemma}[theorem]{Lemma}
\newtheorem{proposition}[theorem]{Proposition}
\theoremstyle{remark}
\newtheorem{remark}[theorem]{Remark}
\theoremstyle{definition}
\numberwithin{theorem}{section}
\numberwithin{equation}{section}
\def\N{{\mathbb N}}
\def\Z{{\mathbb Z}}
\def\R{{\mathbb R}}
\def\C{{\mathbb C}}
\renewcommand{\P}{{\mathbb P}}
\DeclareMathAlphabet{\mathpzc}{OT1}{pzc}{m}{it}
\newcommand{\supp}{\text{\rm supp\,}}
\DeclareFontFamily{U}{mathx}{\hyphenchar\font45}
\DeclareFontShape{U}{mathx}{m}{n}{<5> <6> <7> <8> <9> <10> <10.95> <12> <14.4> <17.28> <20.74> <24.88> mathx10}{}
\DeclareSymbolFont{mathx}{U}{mathx}{m}{n}
\DeclareMathAccent{\widecheck}{0}{mathx}{"71}
\newcommand{\mc}{\mathcal}
\newcommand{\ms}{\mathscr}
\DeclarePairedDelimiter\abs{\lvert}{\rvert}
\DeclarePairedDelimiter\cbrace\{\}
\DeclarePairedDelimiter\ha()
\DeclarePairedDelimiter{\nrm}\lVert\rVert
\newcommand{\nrmb}[1]{\bigl\|#1\bigr\|}
\newcommand{\hab}[1]{\bigl(#1\bigr)}
\newcommand{\cbraceb}[1]{\bigl\{#1\bigr\}}
\newcommand{\nrms}[1]{\Bigl\|#1\Bigr\|}
\newcommand{\has}[1]{\Bigl(#1\Bigr)}
\newcommand{\dd}{\hspace{2pt}\mathrm{d}}
\newcommand{\ddn}{\mathrm{d}}
\newcommand{\ee}{\mathrm{e}}
\DeclareMathOperator{\ind}{\mathbf{1}}
\DeclareMathOperator{\re}{Re}
\begin{document}

\title{Stein interpolation for the real interpolation method}

\author{Nick Lindemulder}
\address[N. Lindemulder]{Institute of Analysis \\
Karlsruhe Institute of Technology \\
Englerstra{\ss}e 2 \\
76131 Karlsruhe\\
Germany}
\email{nick.lindemulder@kit.edu}

\author{Emiel Lorist}
\address[E. Lorist]{Delft Institute of Applied Mathematics\\
Delft University of Technology \\ P.O. Box 5031\\ 2600 GA Delft\\The
Netherlands} \email{emiellorist@gmail.com}

\subjclass[2020]{Primary: 46B70; Secondary 46M35}


\keywords{real interpolation, analytic operator family}

\thanks{The second author was supported by the Vidi subsidy 639.032.427 of the Netherlands Organisation for Scientific Research (NWO)}

\begin{abstract}
We prove a complex formulation of the real interpolation method, showing that the real and complex interpolation methods are not inherently real or complex. Using this complex formulation, we prove Stein interpolation for the real interpolation method. We apply this theorem to interpolate weighted $L^p$-spaces and the sectoriality of closed operators with the real interpolation method.
\end{abstract}

\maketitle

\section{Introduction}
In \cite{St56} Stein proved a convexity principle for the interpolation of analytic operator families on $L^p$-spaces. An important special case of \cite[Theorem 1]{St56} states that an analytic family of linear operators $\cbrace{T(z)}_{z \in \overline{\mathbb{S}}}$ which satisfies
\begin{align*}
  \nrm{T\ha{j+it}f}_{L^{p_j}(S)} &\leq M_j \,\nrm{f}_{L^{q_j}(S)}, & &t \in \R, \, j=0,1
\intertext{for any simple function $f$ and $p_0,p_1,q_0,q_1 \in [1,\infty]$ also satisfies}
  \nrm{T\ha{\theta}f}_{L^p{(S)}} &\leq M_0^{1-\theta}M_1^\theta\,\nrm{f}_{L^{q}(S)},
\end{align*}
where $\theta \in (0,1)$, $\frac{1}{p} = \frac{1-\theta}{p_0}+\frac{\theta}{p_1}$ and $\frac{1}{q} = \frac{1-\theta}{q_0}+\frac{\theta}{q_1}$.
After the development of the complex interpolation method by Calder\'on \cite{Ca64}, this theorem was generalized to general interpolation couples of (quasi)-Banach spaces, see e.g. \cite{CJ84,CS88,Vo92}.
 In \cite{SW06} Stein interpolation was proved for the so-called $\gamma$-interpolation method, using the complex formulation of the $\gamma$-interpolation method in \cite{KLW19}.

The main goal of this paper is to develop Stein interpolation for the real interpolation method.
Since Stein interpolation is inherently tied to complex function theory, this requires a complex formulation of the real interpolation method. In \cite{Cw78,Pe71} Cwikel and Peetre developed a real formulation of the complex interpolation method modelled after the Lions--Peetre mean method for real interpolation \cite{LP64}. Our first main result is a complex formulation of the Lions--Peetre mean method. Combined with the result of Cwikel and Peetre this shows that the real and complex interpolation methods are not inherently real or complex. These interpolation methods are rather living on opposite sides of the Fourier transform. In the forthcoming papers \cite{LL21b, LL21a} we will push this viewpoint further by introducing an abstract framework, containing the real and complex interpolation methods and which has both a real and a complex formulation.

In order to state the complex formulation of the Lions--Peetre mean method, define the strip
\begin{equation*}
  \mathbb{S}:= \cbrace{z \in \C: 0<\re(z)<1}.
\end{equation*}
For a Banach space  $X$ let
$\ms{H}(\overline{\mathbb{S}};X)$
be the space of all continuous functions $f \colon \overline{\mathbb{S}} \to X$ which are analytic on $\mathbb{S}$
 and let $\ms{H}^1(\overline{\mathbb{S}};X)$ be the subspace of all $f \in \ms{H}(\overline{\mathbb{S}};X)$ which satisfy
\begin{equation*}
  \sup_{s \in [0,1]} \nrm{t\mapsto f(s+it)}_{L^1(\R;X)}<\infty.
\end{equation*}
For $f \in \ms{H}^1(\overline{\mathbb{S}};X)$ we define
\begin{align*}
f_j(t)&:= f(j+it), && t \in \R, \, j=0,1.
\end{align*}For $g \in L^1(\R;X)$ we use the Fourier transform and its inverse
\begin{align*}
\widehat{g}(\xi)&:=
   \int_{\R} g(t)e^{-it\xi}\dd t, &\widecheck{g}(t)&:=
   \int_{\R} g(\xi)e^{it\xi}\dd \xi,
\end{align*}
which yields a factor $2\pi$ in the Fourier inversion formula.
For an interpolation couple of Banach spaces $(X_0,X_1)$, $\theta \in (0,1)$ and $p \in [1,\infty]$ we denote the real interpolation spaces by $(X_0,X_1)_{\theta,p}$.

\begin{theorem}\label{theorem:complexintro}
  Let $(X_0,X_1)$ be an interpolation couple of Banach spaces, let $p_0,p_1 \in [1,\infty]$ and let $\theta \in (0,1)$. Set $\frac{1}{p} = \frac{1-\theta}{p_0}+ \frac{\theta}{p_1}$ and define
\begin{align*}
  \nrm{x}_{({X}_0,{X}_1)_{\theta,p_0,p_1}}^{(\mathrm{c})} &:= \inf\,\max_{j = 0,1}\,\nrm{\widehat{f}_j}_{L^{p_j}(\R;X_j)}, && x \in X_0+X_1,
\end{align*}
where the infimum is taken over all $f\in \ms{H}^1(\overline{\mathbb{S}};X_0+X_1)$ with $f(\theta) =x$.
Then we have
\begin{align*}
  \nrm{x}_{({X}_0,{X}_1)_{\theta,p}} \lesssim_{\theta} \nrm{x}_{({X}_0,{X}_1)_{\theta,p_0,p_1}}^{(\mathrm{c})}, &&x \in X_0+X_1,\\
   \nrm{x}_{({X}_0,{X}_1)_{\theta,p}} \gtrsim_{\theta}  \nrm{x}_{({X}_0,{X}_1)_{\theta,p_0,p_1}}^{(\mathrm{c})}, &&x \in X_0\cap X_1.
\end{align*}
\end{theorem}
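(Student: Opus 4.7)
I would bridge the complex formulation and real interpolation through the Lions--Peetre mean method \cite{LP64}: $(X_0,X_1)_{\theta,p}$ with $\tfrac{1}{p} = \tfrac{1-\theta}{p_0} + \tfrac{\theta}{p_1}$ coincides, up to $\theta$-dependent constants, with the space of $x = \int_{\R} u(\xi)\,d\xi$ (converging in $X_0+X_1$) where $u \colon \R \to X_0 \cap X_1$ is measurable and $\max_j \|e^{(j-\theta)\cdot}u\|_{L^{p_j}(\R;X_j)}$ is finite. The dictionary between the two formulations is
$$\widehat{f}_0(\xi) = 2\pi\, e^{-\theta\xi}\, u(\xi), \qquad \widehat{f}_1(\xi) = 2\pi\, e^{(1-\theta)\xi}\, u(\xi),$$
and modulo the complex analysis needed to travel from $f$ to $u$ and back, the theorem reduces to the Lions--Peetre equivalence.

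\textbf{Upper bound.} Given $f \in \ms{H}^1(\overline{\mathbb{S}};X_0+X_1)$ with $f(\theta) = x$, two consequences of Cauchy's theorem do the work. Shifting the integration contour in $\widehat{f}_s(\xi) = \int_\R f(s+it)e^{-it\xi}\,dt$ from $s=0$ to $s=1$---with the horizontal segments at $\pm i\infty$ vanishing by the standard Hardy-space decay of $\ms{H}^1$-functions---gives $\widehat{f}_1(\xi) = e^\xi\widehat{f}_0(\xi)$ almost everywhere, and a similar shift combined with Fourier inversion at $t=0$ yields $f(\theta) = \tfrac{1}{2\pi}\int_\R e^{\theta\xi}\widehat{f}_0(\xi)\,d\xi$; this last integral converges absolutely after splitting at $\xi = 0$ and applying H\"older with $\widehat{f}_0 \in L^{p_0}(\R_-;X_0)$ on one half-line and $\widehat{f}_1 \in L^{p_1}(\R_+;X_1)$ on the other. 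Defining $u(\xi) := \tfrac{1}{2\pi}e^{\theta\xi}\widehat{f}_0(\xi)$, which takes values in $X_0 \cap X_1$ almost everywhere thanks to the two Fourier inclusions (using $\widehat{f}_0 = e^{-\xi}\widehat{f}_1 \in X_1$), one checks immediately that $u$ is a Lions--Peetre representative of $x$ with norm $\tfrac{1}{2\pi}\max_j\|\widehat{f}_j\|_{L^{p_j}(X_j)}$; the upper bound then follows on taking the infimum.

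\textbf{Lower bound and main obstacle.} For $x \in X_0 \cap X_1$, I would take a Lions--Peetre $u$ with $\max_j\|e^{(j-\theta)\cdot}u\|_{L^{p_j}(X_j)} \lesssim_\theta \|x\|_{(X_0,X_1)_{\theta,p}}$ and aim to invert the dictionary by setting $f(z) := \int_\R e^{(z-\theta)\xi}u(\xi)\,d\xi$. Since the integrand generally fails to be Bochner integrable on $\re z = \theta$, I would regularize: pick compactly supported Schwartz approximants $u_n \to u$ in both weighted $L^{p_j}$-spaces, so that $f_n(z) := \int e^{(z-\theta)\xi}u_n(\xi)\,d\xi$ lies in $\ms{H}^1(\overline{\mathbb{S}};X_0\cap X_1)$ with $\widehat{(f_n)_j}(\xi) = 2\pi e^{(j-\theta)\xi}u_n(\xi)$ converging in $L^{p_j}(X_j)$. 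The mismatch $f_n(\theta) = x_n := \int u_n \neq x$ is then repaired by adding $g_n(z) := h(z)(x-x_n)$ with a fixed scalar $h(z) := e^{(z-\theta)^2}$ (so $h(\theta)=1$, $h_j$ is Gaussian, and $\widehat{h}_j$ lies in every $L^p(\R)$), giving $(f_n+g_n)(\theta) = x$ and an extra Fourier contribution bounded by $\|\widehat{h}_j\|_{L^{p_j}}\|x-x_n\|_{X_j}$. The crucial step is arranging $x_n \to x$ in both $X_0$ and $X_1$ individually; only $X_0+X_1$ convergence is automatic from the weighted $L^{p_j}$-approximation. This is the main technical obstacle and I expect it to be handled either by working with a $J$-method representative, whose tighter norm control forces absolute convergence of the defining integral in each $X_j$, or by iterating the correction procedure with successively smaller residuals absorbed through $h$. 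A secondary concern is justifying the Cauchy identities in the upper bound for general $\ms{H}^1$-functions, which follows from standard Hardy-space decay estimates at $\pm i\infty$.
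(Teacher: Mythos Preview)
Your upper bound is the paper's: the contour shift $\widehat f_j(\xi)=e^{(j-\theta)\xi}\widehat f_\theta(\xi)$ and Fourier inversion at $t=0$ are exactly the content of Proposition~\ref{proposition:complexsimple} and Theorem~\ref{theorem:complex}.

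For the lower bound you have correctly isolated the obstacle, but neither of your proposed remedies closes it. A $J$-method representative still only yields convergence of $\int u$ in $X_0+X_1$ (split the integral at $\xi=0$), not in the individual $X_j$. And the correction $g_n(z)=h(z)(x-x_n)$ fails quantitatively: for approximants $u_n\to u$ in the two weighted $L^{p_j}$-spaces there is no control on $\|x-x_n\|_{X_j}$ --- for truncations to $[-n,n]$ the only bound available is $\|x_n\|_{X_0}\le\int_{-n}^n\|u\|_{X_0}\lesssim e^{n\theta}$ --- so the term $\|\widehat h_j\|_{L^{p_j}}\|x-x_n\|_{X_j}$ cannot be made small, and iterating does not contract it.

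The paper avoids any correction step. Lemma~\ref{lemma:schwartzenough}\ref{it:CcLionPeetre} builds, for $x\in X_0\cap X_1$, a compactly supported $g$ with $\int g=x$ \emph{exactly} and weighted norms $\lesssim_\theta\|x\|_{(X_0,X_1)_{\theta,p_0,p_1}}$. The idea you are missing is that the tails $y_+:=\int_n^\infty u$ and $y_-:=\int_{-\infty}^{-n}u$ already lie in $X_0\cap X_1$: for instance $y_+\in X_1$ directly by H\"older on the tail, while $y_+=x-\int_{-\infty}^n u\in X_0$ because $x\in X_0$. Their sizes $\|y_+\|_{X_0}\lesssim e^{n\theta}$, $\|y_+\|_{X_1}\lesssim e^{-n(1-\theta)}$ are exactly compensated by reinserting $y_+$ as a bump near $\xi=n$ (and $y_-$ near $\xi=-n$), where the weights $e^{(j-\theta)\xi}$ supply the matching factors; by contrast your $h$ places the residual at a fixed $\xi$-location and cannot absorb this exponential growth. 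With such a $g$ in hand, $f(z):=\int_\R e^{(z-\theta)\xi}g(\xi)\,d\xi\in\ms H^1(\overline{\mathbb S};X_0+X_1)$ satisfies $f(\theta)=x$ and $\widehat f_j=2\pi\,e^{(j-\theta)\cdot}g$, and the lower bound follows without any limiting procedure.
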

Theorem \ref{theorem:complexintro} is a direct consequence of Theorem \ref{theorem:complex} and the equivalence of the real and Lions--Peetre mean methods.
Note that it only yields an equivalent norm for $x \in X_0\cap X_1$. One can not directly extend this to all $x \in ({X}_0,{X}_1)_{\theta,p}$ using density, since
$$
(X_0,X_1)_{\theta,p_0,p_1}^{(\mathrm{c})}:= \cbraceb{x \in X_0+X_1: \nrm{x}_{({X}_0,{X}_1)_{\theta,p_0,p_1}}^{(\mathrm{c})}<\infty}
$$
is not a Banach space. One can circumvent this issue by considering appropriate spaces of distributions, which will be further explored in \cite{LL21b,LL21a}. In applications the norm equivalence for $x \in X_0 \cap X_1$ usually suffices, avoiding the additional technicalities that distribution theory brings.

The idea to use analytic functions for the real interpolation method is not new. Implicitly it goes back to the work of Lions and Peetre \cite[Chapitre IV]{LP64} and a rudimentary version of Theorem \ref{theorem:complexintro} was used by Zafran in \cite{Za80} to prove \u{S}ne\u{i}berg's lemma for the real interpolation method. Moreover Cwikel, Kalton, Milman and Rochberg \cite{CKMR02} build an unified theory for a class of interpolation methods , containing the real interpolation method, in which they systematically use analytic functions to prove commutator estimates. The key difference between Theorem \ref{theorem:complexintro} and these prior works is the fact that we express the norm of $({X}_0,{X}_1)_{\theta,p_0,p_1}$ in terms of the boundary functions of $f_j$ for $j=0,1$, rather than in terms of the function $f(\theta + i\cdot)$. As a result we have a genuine complex interpolation method formulation of the real interpolation method, which is key in order to prove  the announced version of Stein interpolation for the real interpolation method.

\begin{theorem}\label{theorem:Steininterpolationintro}
Let $(X_0,X_1)$ and $(Y_0,Y_1)$ be interpolation couples of Banach spaces and let $\breve{X}$ be a dense subspace of $X_0\cap X_1$. Let $p_0,p_1,q_0,q_1 \in [1,\infty]$, set $$\frac{1}{p} = \frac{1-\theta}{p_0}+\frac{\theta}{p_1}, \qquad \frac{1}{q} = \frac{1-\theta}{q_0}+\frac{\theta}{q_1},$$
and suppose $p<\infty$. Let $\cbrace{T(z)}_{z \in \overline{\mathbb{S}}}$ be a family of linear operators from $\breve{X}$ to $ Y_0+Y_1$ such that
\begin{enumerate}[(i)]
  \item\label{it:stein1intro} $ T(\cdot) x\in \ms{H}(\overline{\mathbb{S}};Y_0+Y_1)$ for all $x \in \breve{X}$.
  \item \label{it:stein2intro} $T_j x:= T(j+i\cdot)x \in L^\infty(\R;Y_j)$  for all $x \in \breve{X}$ and
  \begin{equation*}
    \nrmb{(T_j \widehat{f}\hspace{2pt})^{\vee}}_{L^{q_j}(\R;Y_j)} \leq M_j \, \nrm{f}_{L^{p_j}(\R;X_j)}, \qquad f \in C_c^\infty(\R)\otimes \breve{X}.
  \end{equation*}
  for some $M_j>0$ and $j=0,1$.
\end{enumerate}
Then $T(\theta)$ is bounded from $(X_0,X_1)_{\theta,p}$ to  $(Y_0,Y_1)_{\theta,q}$ for any $\theta \in (0,1)$ with
\begin{equation*}
  \nrm{T(\theta)}_{(X_0,X_1)_{\theta,p} \to (Y_0,Y_1)_{\theta,q}} \lesssim_{\theta}  M_0^{1-\theta} M_1^\theta.
\end{equation*}
\end{theorem}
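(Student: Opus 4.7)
The plan is to mirror the classical Stein interpolation argument, using the complex formulation $\nrm{\cdot}^{(\mathrm{c})}_{\theta, p_0, p_1}$ of the real method supplied by Theorem~\ref{theorem:complexintro} in place of the Calder\'on complex norm. The first step is the standard rescaling: set $\gamma := \log(M_0/M_1)$ and replace $T(z)$ by $\tilde T(z) := e^{(z-\theta)\gamma} T(z)$. Since multiplication of $\widehat{F}$ by $e^{it\gamma}$ acts as a translation of $(T_j \widehat{F})^\vee$ on the Fourier side and is hence isometric on each $L^{q_j}$, the two boundary estimates become
\begin{equation*}
    \nrmb{(\tilde T_j \widehat{F}\,)^{\vee}}_{L^{q_j}(\R;Y_j)} \leq M_0^{1-\theta} M_1^\theta\, \nrm{F}_{L^{p_j}(\R;X_j)}, \qquad F \in C_c^\infty(\R) \otimes \breve X,
\end{equation*}
for both $j=0$ and $j=1$. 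As $\tilde T(\theta) = T(\theta)$, it suffices to prove the bound with the single constant $M := M_0^{1-\theta} M_1^\theta$.

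Fix $x \in \breve X$ and $\varepsilon > 0$. Using (the second inequality of) Theorem~\ref{theorem:complexintro} I would select $f \in \ms H^1(\overline{\mathbb{S}}; X_0 + X_1)$ with values in $\breve X$, satisfying $f(\theta) = x$ and $\max_{j=0,1} \nrm{\widehat{f}_j}_{L^{p_j}(\R; X_j)} \leq (1+\varepsilon) \nrm{x}^{(\mathrm{c})}_{\theta, p_0, p_1} \lesssim_\theta (1+\varepsilon)\nrm{x}_{(X_0, X_1)_{\theta, p}}$. Set $g(z) := \tilde T(z) f(z)$ on $\overline{\mathbb{S}}$, so that $g(\theta) = T(\theta) x$ and $g_j(t) = \tilde T(j+it) f_j(t)$. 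Writing $f_j = \widehat{F_j}$ with $F_j := (2\pi)^{-1} \widecheck{f_j}$ via Fourier inversion one has $\nrm{F_j}_{L^{p_j}} = (2\pi)^{-1}\nrm{\widehat{f}_j}_{L^{p_j}}$; combining the normalized boundary bound with the reflection isometry $\nrm{\widecheck{h}}_{L^{q_j}} = \nrm{\widehat{h}}_{L^{q_j}}$ then yields $\nrm{\widehat{g}_j}_{L^{q_j}} \lesssim M \nrm{\widehat{f}_j}_{L^{p_j}}$. Granting that $g \in \ms H^1(\overline{\mathbb{S}}; Y_0 + Y_1)$, the first inequality of Theorem~\ref{theorem:complexintro} applied to $T(\theta) x \in Y_0 + Y_1$ delivers
\begin{equation*}
\nrm{T(\theta) x}_{(Y_0, Y_1)_{\theta, q}} \lesssim_\theta \nrm{T(\theta)x}^{(\mathrm{c})}_{\theta, q_0, q_1} \leq \max_{j=0,1} \nrm{\widehat{g}_j}_{L^{q_j}} \lesssim M \nrm{x}_{(X_0, X_1)_{\theta, p}}.
\end{equation*}
Since $p < \infty$, the set $X_0 \cap X_1$, and hence $\breve X$, is dense in $(X_0, X_1)_{\theta, p}$, so $T(\theta)$ extends uniquely to the claimed bounded operator.

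The principal obstacle I anticipate is rigorously producing the $f$ above with values in $\breve X$: Theorem~\ref{theorem:complexintro} a priori yields only $f \in \ms H^1(\overline{\mathbb{S}}; X_0 + X_1)$, whereas $\tilde T(z)$ is defined only on $\breve X$, so one must show the complex norm on $\breve X$ is approximated to within $\varepsilon$ by functions of the form $f(z) = \sum_k \phi_k(z) x_k$ with $x_k \in \breve X$ and $\phi_k$ scalar-valued analytic on $\overline{\mathbb{S}}$. A closely related point is that the boundary hypothesis is stated only on $C_c^\infty(\R) \otimes \breve X$, so applying it to $F_j = (2\pi)^{-1}\widecheck{f_j}$ requires invoking its extension by density to $L^{p_j}(\R; X_j)$ and verifying that this extension really computes $(\tilde T_j \widehat{F_j})^\vee$ for the specific $f$ just constructed. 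These two density steps are standard in spirit but are where the bulk of the technical work lies.
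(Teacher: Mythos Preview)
Your approach is essentially identical to the paper's (see Theorem~\ref{theorem:Steininterpolation1}). Both obstacles you anticipate dissolve once you invoke the sharper Theorem~\ref{theorem:complex} in place of Theorem~\ref{theorem:complexintro}: its second inequality already restricts the infimum to $f$ with $(s,t)\mapsto \widehat{f}_s(t)\in C_c^\infty([0,1]\times\R)\otimes\breve X$, so in particular $f\in\ms{H}^1(\overline{\mathbb{S}})\otimes\breve X$ (making $\tilde T(z)f(z)$ well-defined) and $\tfrac{1}{2\pi}\widecheck{f_j}=\tfrac{1}{2\pi}\widehat{f}_j(-\,\cdot)\in C_c^\infty(\R)\otimes\breve X$ (so hypothesis~\ref{it:stein2intro} applies directly, with no density extension needed); the remaining point $g\in\ms{H}^1(\overline{\mathbb{S}};Y_0+Y_1)$ follows from Hadamard's three lines lemma.
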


Theorem \ref{theorem:Steininterpolationintro} is a direct consequence of Theorem \ref{theorem:Steininterpolation1}, the equivalence of the real and Lions--Peetre mean methods and the density of $X_0\cap X_1$ in  $(X_0,X_1)_{\theta,p}$ when $p<\infty$. Assumption \ref{it:stein2intro} in Theorem \ref{theorem:Steininterpolationintro} is a Fourier multiplier condition, whereas Stein interpolation for the complex interpolation method can be seen as a pointwise multiplier condition. This difference is due to the previously observed fact that the real and complex interpolation methods live on opposite sides of the Fourier transform.

The Fourier multiplier condition \ref{it:stein2intro} in Theorem \ref{theorem:Steininterpolationintro} can in various situation be reduced to a simpler condition on $m$. Indeed, let $1 \leq p \leq q \leq \infty$. For $m \colon \R \to \mc{L}(X,Y)$ one has
  \begin{equation*}
    \nrmb{(m \widehat{f}\hspace{2pt})^{\vee}}_{L^{q}(\R;Y)} \lesssim\nrm{f}_{L^{p}(\R;X)}, \qquad f \in L^{p}(\R;X),
  \end{equation*}
under either of the following conditions:
\begin{itemize}
  \item \textit{Smoothness:} $p=q$ and $m \in C^2(\R;\mc{L}(X,Y))$ with for some $\varepsilon>0$
  \begin{equation*}
    \nrmb{\frac{\dd^n}{\dd t^n}m(\xi)}_{\mc{L}(X,Y)} \lesssim \frac{1}{1+\abs{\xi}^{n+\varepsilon}} \qquad \xi \in \R,\,n=0,1,2,
  \end{equation*}
  since in this case $\widecheck{m} \in L^1(\R;\mc{L}(X,Y))$ (see \cite[Corollary 4.4]{Am97}). Note that if $T_j$ in Theorem  \ref{theorem:Steininterpolationintro} is of polynomial growth at infinity, the assumed decay is easily obtained by multiplying $T(z)$ with $\ee^{(z-\theta)^2}$.
  \item \textit{Banach space geometry:} $X$ has Fourier type $p$ and $Y$ has Fourier type $q'$ and $m \colon \R \to \mc{L}(X,Y)$ is strongly measurable in the strong operator topology with $\xi \mapsto \nrm{m(\xi)}_{\mc{L}(X,Y)} \in L^{r}(\R)$
        for $\frac{1}{r} = \frac1p-\frac1q$, see \cite[Proposition 3.9]{RV18}. One can weaken the assumption on $m$ to the weak space $L^{r,\infty}(\R)$ under a Fourier type $p_0>p$ assumption on $X$ and a Fourier type $q_0<q'$ assumption on $Y$, see \cite[Theorem 3.12]{RV18}. Further results under type and cotype, rather than Fourier type, can also be found in \cite{RV18}.
\end{itemize}

We will give two applications of Theorem \ref{theorem:Steininterpolationintro}. Firstly we will deduce interpolation for weighted, vector-valued $L^p$-spaces from the unweighted case in Proposition \ref{proposition:weightedLp}, for which the Fourier multiplier condition can be checked using Fubini's theorem. Secondly we will interpolate the angle of ($\mc{R}$-)sectorial operators Proposition \ref{proposition:interpolationsectorial}, for which the Fourier multiplier condition can be checked using a simpler version of the smoothness condition noted above. As a consequence we will improve a result of Fackler \cite{Fa13b} on the interpolation of ($\mc{R}$-)analytic semigroups.

\bigskip

This article is organized as follows: In Section \ref{section:LP} we prove some preliminary approximation results for the Lions--Peetre mean method. In Section \ref{section:complex} we will give a complex formulation of the Lions--Peetre interpolation method and in Section \ref{section:stein} we will prove Stein interpolation for the Lions--Peetre interpolation method. Finally, in Section \ref{section:examples} we will deduce the announced applications.

\subsection*{Notation}
 By $\lesssim_{a,b,\ldots}$ we mean that there is a constant $C>0$ depending on $a,b,\ldots$ such that inequality holds and by $\eqsim_{a,b,\ldots}$ we mean that $\lesssim_{a,b,\ldots}$ and $\gtrsim_{a,b,\ldots}$ hold.

\section{The Lions--Peetre mean method}\label{section:LP}
For a general background on interpolation theory we refer to \cite{BL76, Tr78} or \cite[Appendix C]{HNVW16}. For an interpolation couple of Banach spaces $(X_0,X_1)$, $t>0$ and $x \in X_0+X_1$ the $K$-functional is given by
\begin{equation*}
  K(t,x,X_0,X_1) = \inf\cbraceb{\nrm{x_0}_{X_0}+t\nrm{x_1}_{X_1}:x = x_0+x_1}
\end{equation*}
and for $\theta \in (0,1)$ and $p \in [1,\infty]$ the real interpolation space $(X_0,X_1)_{\theta,p}$ is given as all $x \in X_0+X_1$ such that
\begin{equation*}
  \nrm{x}_{(X_0,X_1)_{\theta,p}} := \nrm{t \mapsto t^{-\theta}K(t,x,X_0,X_1)}_{L^p(\R,\frac{\ddn t}{t})} <\infty.
\end{equation*}
In \cite{LP64} Lions and Peetre introduced their mean methods. For $p_0,p_1 \in [1,\infty]$ they defined $(X_0,X_1)_{\theta,p_0,p_1}$ as the space of all $x \in X_0+X_1$ such that
\begin{equation*}
    \nrm{x}_{(X_0,X_1)_{\theta,p_0,p_1}} := \inf \max_{j=0,1} \,\nrm{t\mapsto \ee^{t(j-\theta)}f(t)}_{L^{p_j}(\R;X_j)}<\infty,
  \end{equation*}
  where the infimum is taken over all strongly measurable $f\colon \R \to X_0\cap X_1$ with $\int_{\R} f(t)\dd t = x$. Note that $\int_{\R} f(t)\dd t$  converges as a Bochner integral in $X_0+X_1$ by the $L^{p_j}$-bounds. Lions and Peetre showed   that
  $$
  (X_0,X_1)_{\theta,p} = (X_0,X_1)_{\theta,p_0,p_1}
  $$
isomorphically when $\frac{1}{p} = \frac{1-\theta}{p_0}+\frac{\theta}{p_1}$, so the Lions--Peetre mean method is a reformulation of the real interpolation method.

In the upcoming sections we will need to be able to restrict to smooth functions in the definition of the Lions--Peetre mean method, for which we will prove the following lemma. For a Banach space $X$ we denote the $X$-valued Schwartz functions by $\ms{S}(\R;X)$.

\begin{lemma}\label{lemma:schwartzenough}
  Let $(X_0,X_1)$ be an interpolation couple of Banach spaces, let $p_0,p_1 \in [1,\infty]$ and let $\theta \in (0,1)$.
  \begin{enumerate}[(i)]
    \item \label{it:schwartzLionPeetre} For $x \in X_0+X_1$ we have
  \begin{equation*}
    \nrm{x}_{(X_0,X_1)_{\theta,p_0,p_1}} \eqsim \inf \max_{j=0,1} \,\nrm{t\mapsto \ee^{t(j-\theta)}f(t)}_{L^{p_j}(\R;X_j)},
  \end{equation*}
  where the infimum is taken over $f \in \ms{S}(\R;X_0+X_1)$ with $\int_{\R} f(t)\dd t = x$.
    \item \label{it:CcLionPeetre} Let $\breve{X}$ be a dense subset of $X_0\cap X_1$. For $x \in \breve{X}$ we have
  \begin{equation*}
    \nrm{x}_{(X_0,X_1)_{\theta,p_0,p_1}} \eqsim_\theta \inf \max_{j=0,1} \,\nrm{t\mapsto \ee^{t(j-\theta)}f(t)}_{L^{p_j}(\R;X_j)},
  \end{equation*}
  where the infimum is taken over $f \in C^\infty_c(\R)\otimes \breve{X}$ with $\int_{\R} f(t)\dd t = x$.
  \end{enumerate}

\end{lemma}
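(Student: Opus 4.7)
The plan is to prove both parts by showing $\gtrsim$ via inclusion of admissible sets and $\lesssim$ by an explicit approximation. In both (i) and (ii), the $\gtrsim$ direction is immediate: a Schwartz function $f \in \ms{S}(\R;X_0+X_1)$ with finite weighted $L^{p_j}(\R;X_j)$-norms for $j=0,1$ must be $X_0 \cap X_1$-valued almost everywhere, hence admissible for the original Lions--Peetre infimum; and any element of $C^\infty_c(\R)\otimes \breve X$ is a fortiori a Schwartz function valued in $X_0\cap X_1$.

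For the $\lesssim$ direction of (i), I would mollify. Fix a non-negative $\rho \in C^\infty_c(\R)$ with $\int\rho = 1$ and set $\rho_\varepsilon(t) := \varepsilon^{-1}\rho(t/\varepsilon)$. Given an admissible $f$ for the Lions--Peetre infimum, define $f_\varepsilon := f * \rho_\varepsilon$. By Fubini $\int f_\varepsilon = x$, and using
\begin{equation*}
e^{t(j-\theta)}f_\varepsilon(t) = (K_\varepsilon^{(j)} * g_j)(t),\quad g_j(s):=e^{s(j-\theta)}f(s),\ K_\varepsilon^{(j)}(u):=e^{u(j-\theta)}\rho_\varepsilon(u),
\end{equation*}
Young's convolution inequality bounds the weighted $L^{p_j}$-norm of $f_\varepsilon$ by $\nrm{K_\varepsilon^{(j)}}_{L^1(\R)}\,\nrm{g_j}_{L^{p_j}(\R;X_j)}$. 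Since $\nrm{K_\varepsilon^{(j)}}_{L^1(\R)} = \int e^{\varepsilon u(j-\theta)}\rho(u)\dd u \to 1$ as $\varepsilon\to 0^+$ by dominated convergence, letting $\varepsilon \to 0$ yields $\lesssim$ with constant $1$. Finiteness of the weighted $L^{p_j}$-norms for both $j$, together with $\theta\in(0,1)$, forces exponential decay of $f$, so $f_\varepsilon$ has rapid decay of all derivatives and is Schwartz into $X_0+X_1$ (in fact into $X_0\cap X_1$, since the convolution kernel has compact support).

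For the $\lesssim_\theta$ direction of (ii), I would start from a Schwartz $f$ produced by (i) with $\int f = x$ and approximate it inside $C^\infty_c(\R)\otimes\breve X$ in three steps: (a) truncate with a smooth cutoff $\chi_N$ equal to $1$ on $[-N,N]$ (truncation error in each weighted norm handled by dominated convergence); (b) discretize $\chi_N f$ by a Riemann-type sum $\sum_k \phi_k \otimes f(t_k)$ using a fine smooth partition of unity $\{\phi_k\}$ on $\supp\chi_N$, error controlled by the continuity of $f$ into $X_0\cap X_1$ (which comes for free from the convolution in (i)); (c) replace each $f(t_k)\in X_0\cap X_1$ by an approximant $y_k\in\breve X$ via density. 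The result $\widetilde g := \sum_k\phi_k\otimes y_k \in C^\infty_c(\R)\otimes\breve X$ satisfies $\int\widetilde g = x - z$ for some $z\in X_0\cap X_1$ of arbitrarily small intersection norm. I would finally pick $\widetilde z\in\breve X$ close to $z$, a fixed $\chi\in C^\infty_c(\R)$ with $\int\chi=1$, and correct by setting $g := \widetilde g + \chi\otimes\widetilde z$ (iterating this correction geometrically if one insists on $\int g = x$ exactly). This correction adds $\max_j\nrm{e^{t(j-\theta)}\chi}_{L^{p_j}(\R)}\,\nrm{\widetilde z}_{X_j}$ to the $j$-th norm, and the $\theta$-dependent prefactor coming from the fixed cutoff $\chi$ is what accounts for the $\eqsim_\theta$ instead of $\eqsim$.

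The main nuisance will be the integral-correction step in (ii): one has to simultaneously arrange that the approximation lives in $C^\infty_c(\R)\otimes\breve X$ (so the correction term really uses elements of $\breve X$ rather than general elements of $X_0\cap X_1$), that $\int g = x$ exactly, and that the added mass remains small in both weighted norms. Density of $\breve X$ in $X_0\cap X_1$ handles the first two conditions, while the $\theta$-dependent weighted $L^{p_j}$-norms of the correction cutoff $\chi$ absorb the remaining multiplicative loss.
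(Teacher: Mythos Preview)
Your argument for (i) via mollification is correct and is a genuine alternative to the paper's proof. The paper instead discretizes: it sets $x_k := \int_k^{k+1} f(t)\dd t$ and replaces $f$ by $\widetilde f(t) := \sum_{k\in\Z} x_k\,\varphi(t-k)$ for a fixed bump $\varphi\in C^\infty_c(\R)$ with $\int\varphi=1$. Your convolution approach is arguably cleaner and, as you note, automatically produces an $X_0\cap X_1$-valued continuous function, which you then exploit in (ii).

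For (ii) your truncate--discretize--approximate strategy is close in spirit to the paper's, but the correction step has a real gap. You propose to approximate $z = x - \int\widetilde g \in X_0\cap X_1$ by some $\widetilde z\in\breve X$ and then ``iterate geometrically'' to achieve $\int g = x$ exactly. That iteration does not stay in $C^\infty_c(\R)\otimes\breve X$: the limit is $\widetilde g + \chi\otimes\sum_k\widetilde z_k$, and an infinite sum of elements of $\breve X$ converges in $X_0\cap X_1$ but has no reason to lie in $\breve X$, which is only assumed dense. The fix is simpler than iteration: $\int\widetilde g = \sum_k(\int\phi_k)\,y_k$ is a \emph{finite} linear combination of your chosen $y_k\in\breve X$, and $x\in\breve X$ by hypothesis, so (provided $\breve X$ is a linear subspace, as it is in every application) the correction $z = x - \int\widetilde g$ is already in $\breve X$. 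You may therefore take $\widetilde z = z$ exactly, set $g := \widetilde g + \chi\otimes z$, and obtain $\int g = x$ with $g\in C^\infty_c(\R)\otimes\breve X$ in one step. This is precisely the observation the paper uses.
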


\begin{proof}
  The inequality ``$\leq$'' follows in both cases directly from the definition of the Lions--Peetre mean method. Take $x \in {(X_0,X_1)_{\theta,p_0,p_1}}$ with ${\nrm{x}_{(X_0,X_1)_{\theta,p_0,p_1}}} =1$ and let $f \colon \R \to X_0\cap X_1$ be strongly measurable such that $\int_\R f(t)\dd t = x$ and
  \begin{equation*}
    \max_{j=0,1} \,\nrm{t\mapsto \ee^{t(j-\theta)}f(t)}_{L^{p_j}(\R;X_j)}\leq 2.
  \end{equation*}
    For \ref{it:schwartzLionPeetre} define $$x_k := \int_{k}^{k+1} f(t)\dd t, \qquad k \in \Z$$ and let $\varphi \in C_c^\infty(\R)$ with $\int_\R\varphi(t)\dd t=1$, $\supp \varphi\subseteq [0,1]$ and $\nrm{\varphi}_{L^\infty(\R)} \leq 2$. Setting $$\widetilde{f}(t):= \sum_{k \in \Z} x_k \otimes \varphi(t -k), \qquad t \in \R,$$ we have
  \begin{equation*}
    \max_{j=0,1} \,  \nrm{t\mapsto \ee^{t(j-\theta)}\widetilde{f}(t)}_{L^{p_j}(\R;X_j)} \lesssim \max_{j=0,1} \, \nrm{t\mapsto \ee^{t(j-\theta)}f(t)}_{L^{p_j}(\R;X_j)} \leq 2
  \end{equation*}
  and $\int_\R \widetilde{f}(t)\dd t = x$. Moreover
  \begin{equation*}
    \nrm{x_k}_{X_0+X_1} \lesssim \min\cbrace{\ee^{k\theta}, \ee^{-k(1-\theta)}}, \qquad k \in \Z,
  \end{equation*}
  so $\widetilde{f} \in \ms{S}(\R;X_0+X_1)$, which proves \ref{it:schwartzLionPeetre}.

   For \ref{it:CcLionPeetre} assume additionally that $x \in \breve{X}$. Take $n \in \N$ such that $\nrm{x}_{X_0\cap X_1} \leq n $ and
  define
  \begin{align*}
    y_+ &:= \int_n^\infty f(t) \dd t = x - \int_{-\infty}^n f(t) \dd t
  \end{align*}
  By H\"older's inequality we have
  \begin{align*}
    \nrm{y_+}_{X_0} &\leq \nrm{x}_{X_0} + \int_{-\infty}^n \nrm{f(t)}_{X_0}\dd t \leq n+ \frac{2}{(\theta p_{0}')^{1/p_0'}}\cdot \ee^{n\theta} \leq n+ \frac{2}{\theta}\cdot \ee^{n\theta}\\
     \nrm{y_+}_{X_1} &\leq \int_{n}^{\infty} \nrm{f(t)}_{X_1}\dd t \leq \frac{1}{((1-\theta)p_1')^{1/p_1'}}\cdot \ee^{-n(1-\theta)}\leq \frac{1}{1-\theta}\cdot \ee^{-n(1-\theta)}.
  \end{align*}
  so in particular $y_+\in X_0 \cap X_1$.
Therefore, $g_+:= \ind_{[n-1,n]} \otimes y_+$ satisfies
  \begin{equation*}
    \max_{j=0,1} \, \nrm{t\mapsto \ee^{t(j-\theta)}g_+(t)}_{L^{p_j}(\R;X_j)} \lesssim_\theta 1.
  \end{equation*}
  Analogously, $g_-:= \ind_{[-n,-n+1]} \otimes \int_{-\infty}^{-n} f(t) \dd t$  satisfies the same estimate, so defining $g:= f \ind_{[-n,n]} + g_++g_-$ we have $\int_\R g(t) \dd t = x$ and
      \begin{align*}
    \max_{j=0,1} \, \nrm{t\mapsto \ee^{t(j-\theta)}g(t)}_{L^{p_j}(\R;X_j)} \lesssim_\theta 1.
  \end{align*}
Similar to the proof of \ref{it:schwartzLionPeetre}
define $$y_k := \int_{k}^{k+1} g(t)\dd t, \qquad -n\leq k \leq n-1$$ and set $$\widetilde{g}(t):= \sum_{k = -n}^{n-1} y_k \otimes \varphi(t -k), \qquad t \in \R.$$ As before we have
  \begin{equation*}
    \nrm{t\mapsto \ee^{t(j-\theta)}\widetilde{g}(t)}_{L^{p_j}(\R;X_j)} \lesssim \max_{j=0,1} \, \nrm{t\mapsto \ee^{t(j-\theta)}g(t)}_{L^{p_j}(\R;X_j)} \lesssim_\theta 1.
  \end{equation*}
  Let $(z_k)_{k=-n}^{n-1}\subseteq \breve{X}$ be such that $\nrm{y_k-z_k}_{X_0\cap X_1} \leq {\ee^{-n}}$ and define
  $$\widetilde{h}(t) := \sum_{k = -n}^{n-1} z_k \otimes \varphi(t -k), \qquad t \in \R$$ with $\varphi$ as before. Then we have $$z:= \sum_{k=-n}^{n-1}y_k-z_k = x - \sum_{k=-n}^{n-1} z_k \in \breve{X},$$
  so $h := \widetilde{h} + \varphi \otimes z$ is a function in $C^\infty_c(\R) \otimes \breve{X}$ and $\int_\R h(t)\dd t=x$. Moreover, since $\nrm{\widetilde{g}-\widetilde{h}}_{L^\infty(\R;X_0\cap X_1)}\leq 2{\ee^{-n}}$ and $\nrm{z}_{X_0\cap X_1} \leq 1$ by construction, we have
  \begin{align*}
    \max_{j=0,1} \,\nrm{t\mapsto \ee^{t(j-\theta)}h(t)}_{L^{p_j}(\R;X_j)} &\lesssim_\theta 1
  \end{align*}
  which finishes the proof.
\end{proof}

\section{A complex formulation of the Lion--Peetre mean method}\label{section:complex}
We now turn to the complex formulation of the Lions--Peetre mean method, and thus of the real interpolation method. Let $X$ be a Banach space. In addition to the spaces $\ms{H}(\overline{\mathbb{S}};X)$ and $\ms{H}^1(\overline{\mathbb{S}};X)$ defined in the introduction, we will use spaces of holomorphic functions which are not necessarily continuous on the boundary. Let
$\ms{H}({\mathbb{S}};X)$
be the space of all analytic functions $f \colon \mathbb{S} \to X$
 and let $\ms{H}^1({\mathbb{S}};X)$ be the subspace of all $f \in \ms{H}({\mathbb{S}};X)$ which satisfy
\begin{equation*}
  \sup_{s \in (0,1)} \nrm{t\mapsto f(s+it)}_{L^1(\R;X)}<\infty.
\end{equation*}
For $f \in \ms{H}({\mathbb{S}};X)$ we define
\begin{align*}
f_s(t)&:= f(s+it), && t \in \R, \, s \in (0,1)
\end{align*}
and for $f \in \ms{H}^1({\mathbb{S}};X)$ the limits $f_j:= \lim_{s \to j} f_s$ exist in $L^1(\R;X)$ (see e.g. \cite{BK07,Pi16}). Note that these limits  coincide with the pointwise limit if $f \in \ms{H}^1(\overline{\mathbb{S}};X)$.

Our first complex formulation is a simple reformulation of the Lions-Peetre mean method using the Fourier transform of $f_\theta$.

\begin{proposition}\label{proposition:complexsimple}
  Let $(X_0,X_1)$ be an interpolation couple of Banach spaces, let $p_0,p_1 \in [1,\infty]$ and let $\theta \in (0,1)$.
For $x \in X_0+X_1$ we have
\begin{align*}
  \nrm{x}_{({X}_0,{X}_1)_{\theta,p_0,p_1}} &\eqsim \inf\,\max_{j = 0,1}\,\nrm{t \mapsto \ee^{t(j-\theta)} \widehat{f}_\theta(t)}_{L^{p_j}(\R;X_j)},
\end{align*}
where the infimum is taken over all $f \in \ms{H}(\mathbb{S};X_0+X_1)$ with $f(\theta) =x$ and  $f_\theta \in L^1(\R;X_0+X_1)$.
\end{proposition}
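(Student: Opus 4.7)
The plan is to exhibit a direct correspondence between Lions--Peetre representations $g$ of $x$ and admissible analytic functions $f$, via Fourier duality across the critical line $\re(z) = \theta$: given such a $g$, I set
\[
f(z) := \int_{\R} g(s)\, \ee^{s(z-\theta)}\dd s,
\]
so that $f_\theta(\eta) = \int_\R g(s)\, \ee^{is\eta}\dd s = \widecheck{g}(\eta)$, and hence $\widehat{f_\theta} = 2\pi\, g$ by Fourier inversion; the inverse map is $g := \tfrac{1}{2\pi}\widehat{f_\theta}$. Under this correspondence the two weights $\ee^{s(j-\theta)}$ and $\ee^{\xi(j-\theta)}$ coincide identically on the two sides, so the whole difference between the Lions--Peetre infimum and the new infimum boils down to the factor $2\pi$ together with the loss in Lemma~\ref{lemma:schwartzenough}(i) needed to restrict the Lions--Peetre infimum to Schwartz representatives.

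For the inequality $\nrm{x}_{(X_0,X_1)_{\theta,p_0,p_1}} \gtrsim \inf\max_j\nrm{\cdot}$, I pick a near-optimal Lions--Peetre representation $x = \int_\R g(s)\dd s$ and apply Lemma~\ref{lemma:schwartzenough}(i) to reduce, at universal cost, to $g \in \ms{S}(\R; X_0+X_1)$. The Schwartz decay then makes $f(z) := \int_\R g(s)\, \ee^{s(z-\theta)}\dd s$ absolutely convergent in $X_0+X_1$ for every $z \in \C$, and differentiation under the integral sign yields $f$ entire, hence in particular $f \in \ms{H}(\mathbb{S}; X_0+X_1)$ with $f(\theta) = x$. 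Since $f_\theta = \widecheck{g} \in \ms{S}(\R;X_0+X_1) \subseteq L^1(\R;X_0+X_1)$, Fourier inversion gives $\widehat{f_\theta} = 2\pi\, g$ pointwise, and the identity $\max_j\nrm{t \mapsto \ee^{t(j-\theta)}\widehat{f_\theta}(t)}_{L^{p_j}(\R;X_j)} = 2\pi \max_j\nrm{s \mapsto \ee^{s(j-\theta)} g(s)}_{L^{p_j}(\R;X_j)}$ delivers the required bound after taking infima.

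For the reverse inequality, given an admissible $f$ I set $g(\xi) := \tfrac{1}{2\pi}\widehat{f_\theta}(\xi)$. Finiteness of both weighted $L^{p_j}(\R;X_j)$ norms of $\widehat{f_\theta}$ forces $g(\xi) \in X_0 \cap X_1$ for almost every $\xi$. Splitting $\R$ into $\R_+$ and $\R_-$ and applying H\"older --- on $\R_+$ pairing $\ee^{-\xi(1-\theta)} \in L^{p_1'}(\R_+)$ with $\ee^{\xi(1-\theta)}\nrm{\widehat{f_\theta}(\xi)}_{X_1} \in L^{p_1}$, and symmetrically on $\R_-$ using the $X_0$-weight --- shows $\widehat{f_\theta} \in L^1(\R; X_0+X_1)$. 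Fourier inversion for $f_\theta \in L^1$ is therefore valid at the continuity point $t=0$ (continuity coming from analyticity of $f$), so $\int_\R g(\xi)\dd\xi = f_\theta(0) = f(\theta) = x$, exhibiting $g$ as a Lions--Peetre representation of $x$ whose weighted norms are exactly $\tfrac{1}{2\pi}$ times those appearing in the new norm for $f$.

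The principal technical obstacle is the $\lesssim$ direction, specifically pushing the two weighted $L^{p_j}(\R;X_j)$ hypotheses down to global $L^1(\R;X_0+X_1)$-integrability of $\widehat{f_\theta}$, which is what makes Fourier inversion at $t=0$ meaningful; the H\"older step introduces $\theta$-dependent constants but only qualitatively, so the final norm inequality remains $\theta$-uniform up to the constant from Lemma~\ref{lemma:schwartzenough}(i). A secondary but benign point is the strong measurability of $g$ into $X_0 \cap X_1$, which follows automatically from the continuity of $\widehat{f_\theta}$ into $X_0+X_1$ together with the two weighted $L^{p_j}(\R;X_j)$ bounds.
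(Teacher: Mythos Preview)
Your approach is essentially identical to the paper's: the correspondence $g \leftrightarrow f$ via $f(z)=\int_\R g(s)\ee^{s(z-\theta)}\dd s$ and $g=\tfrac{1}{2\pi}\widehat{f_\theta}$, the use of Lemma~\ref{lemma:schwartzenough}\ref{it:schwartzLionPeetre} for the ``$\gtrsim$'' direction, and the H\"older argument plus Fourier inversion at $t=0$ for the ``$\lesssim$'' direction all match the paper exactly. You are in fact slightly more careful than the paper in spelling out that $g$ lands in $X_0\cap X_1$ almost everywhere and that continuity of $f_\theta$ at $0$ is what makes the pointwise inversion legitimate.

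There is one genuine slip. You write that ``the Schwartz decay then makes $f(z):=\int_\R g(s)\,\ee^{s(z-\theta)}\dd s$ absolutely convergent in $X_0+X_1$ for every $z\in\C$, and \ldots yields $f$ entire.'' This is false: Schwartz decay is only faster-than-polynomial, not exponential (e.g.\ smooth $g$ with $g(s)\sim \ee^{-\sqrt{|s|}}$ at infinity is Schwartz but $\int g(s)\ee^{\epsilon s}\dd s=\infty$ for any $\epsilon>0$). What actually gives convergence on the strip is the pair of weighted bounds $t\mapsto \ee^{t(j-\theta)}g(t)\in L^{p_j}(\R;X_j)$: for $z=s+i\tau\in\mathbb{S}$ one splits $\R=\R_-\cup\R_+$ and applies H\"older exactly as you do later in the other direction, obtaining absolute convergence in $X_0+X_1$ for $z\in\mathbb{S}$ only. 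This is precisely what the paper does (``using H\"older's inequality, we note that $g(z)$ \ldots is absolutely convergent in $X_0+X_1$ for any $z\in\mathbb{S}$''). Since the statement only requires $f\in\ms{H}(\mathbb{S};X_0+X_1)$, this weaker convergence suffices and the rest of your argument goes through unchanged.
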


\begin{proof}
First let $x \in X_0 +X_1$ be such that there is an $f \in \ms{H}(\mathbb{S};X_0+X_1)$ with
 $f(\theta) = x$, $f_\theta \in L^1(\R;X_0+X_1)$ and
 \begin{equation*}
  t \mapsto \ee^{t(j-\theta)} \widehat{f}_\theta(t) \in L^{p_j}(\R;X_j),\qquad j=0,1.
 \end{equation*}
   Since $\widehat{f}_\theta \in L^1(\R;X_0+X_1)$ by H\"older's inequality, we have by Fourier inversion (see \cite[Proposition 2.4.5]{HNVW16}) $$\frac{1}{{2\pi}}\int_\R \widehat{f}_\theta(t)\dd t = f_\theta(0) =  x$$ with convergence in $X_0+X_1$. Thus using $g= \frac{\widehat{f_\theta}}{{2\pi}}$ in the definition of $({X}_0,{X}_1)_{\theta,p_0,p_1}$
   we have
   \begin{align*}
     \nrm{x}_{({X}_0,{X}_1)_{\theta,p_0,p_1}} \leq  \frac{1}{{2 \pi}} \max_{j = 0,1}\,\nrm{t \mapsto \ee^{t(j-\theta)} \widehat{f}_\theta(t)}_{L^{p_j}(\R;X_j)}.
   \end{align*}
   Taking the infimum over all such $f$ yields the inequality ``$\lesssim$''.

Conversely take $x \in ({X}_0,{X}_1)_{\theta,p_0,p_1}$ and let $f\in \ms{S}(\R;X_0+X_1)$ be such that  $\int_{\R}f(t)\dd t = x$ and
 \begin{equation*}
  t \mapsto \ee^{t(j-\theta)} f(t) \in L^{p_j}(\R;X_j),\qquad j=0,1.
 \end{equation*}
Then, using H\"older's inequality, we note that
\begin{equation*}
  g(z):= \int_{\R}   f(t) \ee^{(z-\theta)t}\dd t , \qquad z \in {\mathbb{S}}
\end{equation*}
is absolutely convergent in $X_0+X_1$ for any $z \in \mathbb{S}$ and thus $g \in \ms{H}(\mathbb{S};X_0+X_1)$.  Moreover $$g_\theta ={2 \pi}\, \widecheck{f} \in \ms{S}(\R;X_0+X_1)\subseteq L^1(\R;X_0 + X_1)$$
and $g(\theta)= x$. Taking the infimum over all such $f$ yields the inequality ``$\gtrsim$'' by Lemma \ref{lemma:schwartzenough}\ref{it:schwartzLionPeetre}.
\end{proof}

%
%

In order to reformulate Proposition \ref{proposition:complexsimple} further in the spirit of the complex interpolation method, we will use that $t \mapsto \ee^{t(j-\theta)} \widehat{f}_\theta(t)$ is independent of $\theta \in (0,1)$ for $f \in\ms{H}^1({\mathbb{S}};X_0+X_1)$. Combined with the approximation in Lemma~\ref{lemma:schwartzenough}\ref{it:CcLionPeetre} this yields our complex formulation of the Lions--Peetre mean method.

\begin{theorem}\label{theorem:complex}
  Let $(X_0,X_1)$ be an interpolation couple of Banach spaces, let $p_0,p_1 \in [1,\infty]$ and let $\theta \in (0,1)$.
We have
\begin{align*}
  \nrm{x}_{({X}_0,{X}_1)_{\theta,p_0,p_1}} &\leq \frac{1}{2\pi}\inf\,\max_{j = 0,1}\,\nrm{\widehat{f}_j}_{L^{p_j}(\R;X_j)}, && x \in X_0+X_1,
\intertext{where the infimum is taken over all $f \in \ms{H}^1({\mathbb{S}};X_0+X_1)$ with $f(\theta) =x$.
Furthermore, if $\breve{X}$ is a dense subspace of $X_0\cap X_1$, we have}
  \nrm{x}_{({X}_0,{X}_1)_{\theta,p_0,p_1}} &\gtrsim_\theta \inf\,\max_{j = 0,1}\,\nrm{\widehat{f}_j}_{L^{p_j}(\R;X_j)}, &&x \in \breve{X},
\end{align*}
where the infimum is taken over all $f \in \ms{H}^1(\overline{\mathbb{S}};X_0+X_1)$ such that $f(\theta) =x$ and $(s,t)\mapsto \widehat{f}_s(t) \in C_c^\infty([0,1]\times \R)\otimes \breve{X}$.
\end{theorem}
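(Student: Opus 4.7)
The strategy is to derive Theorem \ref{theorem:complex} from Proposition \ref{proposition:complexsimple} by translating between $\widehat{f_\theta}$ and the boundary Fourier transforms $\widehat{f_j}$ via the identity
\begin{equation*}
\widehat{f_j}(\xi) = e^{(j-\theta)\xi}\,\widehat{f_\theta}(\xi), \qquad j = 0, 1,
\end{equation*}
which holds for every $f \in \ms{H}^1(\mathbb{S}; X_0+X_1)$ and is the precise expression of the announced fact that $\xi \mapsto e^{-s\xi}\widehat{f_s}(\xi)$ is $s$-independent. I would prove this for $s_0, s_1 \in (0,1)$ by applying Cauchy's theorem to $z \mapsto f(z)\,e^{-z\xi}$ on the rectangle $[s_0,s_1]\times[-T,T]$: after parametrization, the two vertical sides contribute $i\,e^{-s_1\xi}\widehat{f_{s_1}}(\xi) - i\,e^{-s_0\xi}\widehat{f_{s_0}}(\xi)$, while Fubini shows that $T \mapsto \int_{s_0}^{s_1}\nrm{f(r+iT)}_{X_0+X_1}\,dr$ lies in $L^1(\R)$, so the horizontal contributions vanish along some subsequence $T_n \to \pm\infty$. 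The identity at the boundary $s = 0,1$ then follows from the $L^1$-convergence $f_s \to f_j$ (see \cite{BK07,Pi16}) together with the continuity of the Fourier transform $L^1 \to L^\infty$.

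For Part 1, I would take $f \in \ms{H}^1(\mathbb{S}; X_0+X_1)$ with $f(\theta) = x$. Since $f_\theta \in L^1(\R; X_0+X_1)$, the explicit bound derived in the proof of Proposition \ref{proposition:complexsimple} reads
\begin{equation*}
\nrm{x}_{(X_0, X_1)_{\theta, p_0, p_1}} \leq \frac{1}{2\pi}\max_{j=0,1}\nrm{t \mapsto e^{t(j-\theta)}\widehat{f_\theta}(t)}_{L^{p_j}(\R; X_j)},
\end{equation*}
and by the identity the right-hand side equals $(2\pi)^{-1}\max_j\nrm{\widehat{f_j}}_{L^{p_j}(\R; X_j)}$. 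Taking the infimum over admissible $f$ finishes Part 1.

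For Part 2, given $x \in \breve{X}$, Lemma \ref{lemma:schwartzenough}\ref{it:CcLionPeetre} produces $g \in C_c^\infty(\R) \otimes \breve{X}$ with $\int_\R g(t)\,dt = x$ and $\max_j\nrm{t \mapsto e^{(j-\theta)t}g(t)}_{L^{p_j}(\R; X_j)} \lesssim_\theta \nrm{x}_{(X_0, X_1)_{\theta, p_0, p_1}}$. I then set $f(z) := \int_\R g(t)\,e^{(z-\theta)t}\,dt$, which is entire and satisfies $f(\theta) = x$. Writing $G_s(t) := e^{(s-\theta)t}g(t)$, one has $f_s = \widecheck{G_s}$, hence $\widehat{f_s}(\xi) = 2\pi\,G_s(\xi) = 2\pi\,e^{(s-\theta)\xi}g(\xi)$ by Fourier inversion. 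Since $g$ is smooth and compactly supported, the map $(s,t)\mapsto \widehat{f_s}(t)$ lies in $C_c^\infty([0,1]\times\R)\otimes\breve{X}$, and bounding $(1+u^2)\nrm{f_s(u)} \lesssim \nrm{G_s}_{L^\infty}+\nrm{G_s''}_{L^\infty}$ uniformly in $s \in [0,1]$ gives $\sup_{s\in[0,1]}\nrm{f_s}_{L^1(\R; X_0+X_1)}<\infty$; thus $f \in \ms{H}^1(\overline{\mathbb{S}};X_0+X_1)$ is an admissible competitor in the theorem's infimum. Finally, $\max_j\nrm{\widehat{f_j}}_{L^{p_j}} = 2\pi\max_j\nrm{t \mapsto e^{(j-\theta)t}g(t)}_{L^{p_j}} \lesssim_\theta \nrm{x}$, which gives Part 2.

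The main technical subtlety is the contour-shift step: $\ms{H}^1$-membership forces no pointwise decay of $f(r+iT)$ as $T\to\pm\infty$, so one cannot close the contour directly but must instead extract a good subsequence via Fubini. Everything else is bookkeeping with Fourier inversion and the uniform $L^1$-control afforded by smoothness and compact support in the construction of $f$.
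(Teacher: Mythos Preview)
Your proposal is correct and follows essentially the same route as the paper's proof: both parts hinge on the identity $\widehat{f_j}(\xi)=e^{(j-\theta)\xi}\widehat{f_\theta}(\xi)$ obtained by a contour shift, followed by an appeal to Proposition~\ref{proposition:complexsimple} for Part~1 and to Lemma~\ref{lemma:schwartzenough}\ref{it:CcLionPeetre} with the explicit Laplace-type $f(z)=\int_\R e^{(z-\theta)t}g(t)\,dt$ for Part~2. The only cosmetic difference is that the paper disposes of the horizontal contour pieces by citing \cite[Lemma~H.1.4]{HNVW17}, whereas you spell out the Fubini/subsequence argument directly; your version is self-contained and makes clear why only a subsequence $T_n\to\pm\infty$ is available.
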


\begin{proof}
Let $x \in X_0+X_1$ and take $f \in \ms{H}^1({\mathbb{S}};X_0+X_1)$ such that $f(\theta)=x$. For $s \in (0,1)$ and $n \in \N$ define
  \begin{equation*}
    g_{s,n}(\xi):= \int_{-n}^n f(s+it) \ee^{-(s+it)\xi} \dd t, \qquad \xi \in \R
  \end{equation*}
  and note that for $0<s_1<s_2<1$ we have by the Cauchy--Goursat theorem
  \begin{equation*}
    g_{s_1,n}(\xi)-g_{s_2,n}(\xi) = \sum_{\epsilon = \pm 1} \epsilon \int_{s_1}^{s_2} f(s+i\epsilon n)\ee^{-(s+i\epsilon n)\xi} \dd s, \qquad \xi \in \R.
  \end{equation*}
By \cite[Lemma H.1.4]{HNVW17} the right hand-side tends to zero for $n \to \infty$, so
\begin{equation*}
  \ee^{-s_1\xi}\widehat{f}_{s_1}(\xi) = \ee^{-s_2\xi}\widehat{f}_{s_2}(\xi), \qquad \xi\in \R.
\end{equation*}
Using the $L^1$-convergence of $f_s \to f_j$ for $j=0,1$, we deduce
  \begin{equation*}
 \widehat{f}_{j}(\xi) = \ee^{(j-\theta)\xi }\widehat{f}_{\theta }(\xi), \qquad \xi\in \R.
\end{equation*}
By Proposition \ref{proposition:complexsimple} (and an inspection of the proof for the constant) this implies
\begin{align*}
  \nrm{x}_{({X}_0,{X}_1)_{\theta,p_0,p_1}} &\leq \frac{1}{2\pi}\max_{j = 0,1}\,\nrm{ \widehat{f}_j}_{L^{p_j}(\R;X_j)},
\end{align*}
so taking the infimum over all such $f$ yields the first claim.

For the second claim take $x \in \breve{X}$ and let $g \in C_c^\infty(\R)\otimes \breve{X}$ with $\int_{\R}g(t) \dd t = x$. For
\begin{equation*}
  f(z): = \int_\R \ee^{\tau(z-\theta)}g(\tau) \dd \tau, \qquad z \in \overline{\mathbb{S}}
\end{equation*}
we have $f \in \ms{H}^1(\overline{\mathbb{S}}; X_0+X_1)$, $f(\theta)=x$ and $$(s,t)\mapsto \widehat{f}_s(t) \in C_c^\infty([0,1]\times \R)\otimes \breve{X}.$$ Moreover
 $\widehat{f}_j(t) = 2\pi \, \ee^{t(j-\theta)}g(t)$ for $t \in \R$ by Fourier inversion (see \cite[Proposition 2.4.5]{HNVW16}), so
\begin{equation*}
  \max_{j=0,1}\nrm{\widehat{f}_j}_{L^{p_j}(\R;X_j)} = 2\pi\,\max_{j=0,1} \,\nrm{t\mapsto \ee^{t(j-\theta)}g(t)}_{L^{p_j}(\R;X_j)}.
\end{equation*}
Taking the infimum over all such $f$ proves the second claim by Lemma \ref{lemma:schwartzenough}\ref{it:CcLionPeetre}.
\end{proof}

\begin{remark}\label{remark:distributions}
  As discussed in the introduction, Theorem \ref{theorem:complex} can be extended to a norm equivalence for all $x \in (X_0,X_1)_{\theta,p_0,p_1}$ using appropriate spaces of distributions, which will be further explored in \cite{LL21b,LL21a}.
\end{remark}

\section{Stein interpolation}\label{section:stein}
Using the complex formulation from the previous section we can now prove Stein interpolation for the Lions--Peetre method, and thus for the real interpolation method.

\begin{theorem}\label{theorem:Steininterpolation1}
Let $(X_0,X_1)$ and $(Y_0,Y_1)$ be interpolation couples of Banach spaces and let $\breve{X}$ be a dense subspace of $X_0\cap X_1$. Let $p_0,p_1,q_0,q_1 \in [1,\infty]$. Let $\cbrace{T(z)}_{z \in \overline{\mathbb{S}}}$ be a family of linear operators from $\breve{X}$ to $Y_0+Y_1$ such that
\begin{enumerate}[(i)]
  \item\label{it:stein1} $ T(\cdot) x\in \ms{H}(\overline{\mathbb{S}};Y_0+Y_1)$ for all $x \in \breve{X}$.
\item \label{it:stein2} $T_j x:= T(j+i\cdot)x \in L^\infty(\R;Y_j)$  for all $x \in \breve{X}$ and
  \begin{equation*}
    \nrmb{(T_j \widehat{g}\hspace{2pt})^{\vee}}_{L^{q_j}(\R;Y_j)} \leq M_j \, \nrm{g}_{L^{p_j}(\R;X_j)}, \qquad g \in C_c^\infty(\R)\otimes \breve{X}.
  \end{equation*}
  for some $M_j>0$ and $j=0,1$.
\end{enumerate}
Then we have $T(\theta)\breve{X} \subseteq (Y_0,Y_1)_{\theta,q_0,q_1}$ for any $\theta \in (0,1)$ with
\begin{equation*}
  \nrm{T(\theta) x}_{(Y_0,Y_1)_{\theta,q_0,q_1}} \lesssim_\theta  M_0^{1-\theta} M_1^\theta \, \nrm{x}_{(X_0,X_1)_{\theta,p_0,p_1}}, \qquad x \in \breve{X}.
\end{equation*}
\end{theorem}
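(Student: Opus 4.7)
The plan is to run the classical Stein construction inside the complex formulation of the Lions--Peetre mean method provided by Theorem \ref{theorem:complex}. I will use that theorem on the input side to produce a convenient analytic extension $f$ of $x$, define $F(z) := T(z)f(z)$, and then use the complex formulation on the output side to estimate $\|T(\theta)x\|_{(Y_0,Y_1)_{\theta,q_0,q_1}}$ in terms of $\max_j \|\hat{F}_j\|_{L^{q_j}(\R;Y_j)}$.

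For the input extension, I invoke the construction in the proof of the lower bound of Theorem \ref{theorem:complex} to pick $g \in C_c^\infty(\R) \otimes \breve{X}$ with $\int_\R g(\tau)\ud\tau = x$, set $f(z) := \int_\R g(\tau) e^{(z-\theta)\tau}\ud\tau$, and note that $f \in \ms{H}^1(\overline{\mathbb{S}}; X_0+X_1)$, $f$ takes values in $\breve X$, $\hat{f}_j(\xi) = 2\pi g(\xi) e^{(j-\theta)\xi}$, and $g$ can be chosen with $\max_j \|\hat{f}_j\|_{L^{p_j}} \lesssim_\theta \|x\|_{(X_0,X_1)_{\theta,p_0,p_1}}$. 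Writing $g = \sum_k \psi_k \otimes v_k$ as a finite sum and setting $h_k(z) := \int \psi_k(\tau) e^{(z-\theta)\tau}\ud\tau$, one has $F(z) = \sum_k h_k(z)\, T(z) v_k$, which by hypothesis \ref{it:stein1} and the fact that each $h_k$ is entire lies in $\ms{H}(\overline{\mathbb{S}}; Y_0+Y_1)$ with $F(\theta) = T(\theta) x$; together with the Paley--Wiener decay of the $h_k$ and the bound $T_j v_k \in L^\infty(\R;Y_j)$ from \ref{it:stein2}, it lies in $\ms{H}^1$ so that Theorem \ref{theorem:complex} applies to it.

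The crux is the estimate of $\|\hat{F}_j\|_{L^{q_j}}$. The pointwise product $F_j(t) = T(j+it) f_j(t) = \sum_k T(j+it) v_k \cdot \check{g}_{j,k}(t)$, with $g_{j,k}(\tau) := \psi_k(\tau) e^{(j-\theta)\tau}$, is \emph{not} itself a Fourier multiplier output, but its Fourier transform becomes one after a reflection. By the convolution theorem applied to the product of the tempered $Y_j$-valued distribution $\mathcal{K}_{j,k} := \widehat{T(j+i\cdot) v_k}$ with the Schwartz function $\check{g}_{j,k}$,
$$\hat{F}_j(\eta) = \sum_k \int \mathcal{K}_{j,k}(\eta - \tau)\, g_{j,k}(\tau)\ud\tau,$$
while a parallel computation shows that for any $h = \sum_k \phi_k \otimes v_k \in C_c^\infty(\R)\otimes \breve X$,
$$(T_j \hat{h})^\vee(t) = \sum_k \int \mathcal{K}_{j,k}(\sigma - t)\, \phi_k(\sigma)\ud\sigma.$$
Choosing $\phi_k(\sigma) := g_{j,k}(-\sigma)$, equivalently $h(\tau) := g_j(-\tau) = \sum_k g_{j,k}(-\tau)\, v_k$, and substituting $\sigma \to -\sigma$ yields the key identification $\hat{F}_j(\eta) = (T_j \hat{h})^\vee(-\eta)$, so that hypothesis \ref{it:stein2} together with reflection-invariance of the $L^{q_j}$-norm gives
$$\|\hat{F}_j\|_{L^{q_j}} = \|(T_j \hat{h})^\vee\|_{L^{q_j}} \leq M_j \|h\|_{L^{p_j}} = M_j \|g_j\|_{L^{p_j}} = \tfrac{M_j}{2\pi} \|\hat{f}_j\|_{L^{p_j}}.$$

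Combining this with the upper bound in Theorem \ref{theorem:complex} applied to $F$ yields $\|T(\theta)x\|_{(Y_0,Y_1)_{\theta,q_0,q_1}} \lesssim_\theta \max_j M_j \cdot \|x\|_{(X_0,X_1)_{\theta,p_0,p_1}}$. The standard Stein rescaling $T(z) \mapsto M_0^{z-1} M_1^{-z} T(z)$, whose boundary Fourier-multiplier constants both reduce to $1$ (one verifies this via $e^{i\alpha\xi}\hat{g}(\xi) = \widehat{g(\cdot + \alpha)}(\xi)$ to absorb the $\xi$-dependent phase $M_0^{i\xi} M_1^{-i\xi}$), then sharpens $\max_j M_j$ to $M_0^{1-\theta} M_1^\theta$ upon evaluation at $z = \theta$. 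The main obstacle is the convolution identification above: since $\mathcal{K}_{j,k}$ is only a tempered distribution, the manipulations must be interpreted distributionally, but because $g$ is a finite tensor sum everything reduces to convolutions of tempered distributions with Schwartz functions, which are well defined.
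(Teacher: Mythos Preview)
Your proof is correct and follows essentially the same route as the paper: both invoke the lower bound of Theorem~\ref{theorem:complex} to produce $f$ from some $g\in C_c^\infty(\R)\otimes\breve X$, form $T(z)f(z)$, verify $\ms{H}^1$-membership, and apply the upper bound of Theorem~\ref{theorem:complex} together with assumption~\ref{it:stein2}. The only differences are cosmetic: the paper builds the rescaling factor $(M_0/M_1)^{z-\theta}$ into $h$ from the start rather than at the end, invokes Hadamard's three lines lemma in place of your Paley--Wiener decay argument for $\ms{H}^1$-membership, and compresses your entire convolution/distribution computation to the single observation that $\widehat{T_jf_j}$ is a reflection of $(T_j\hat g)^\vee$ for $g=\tfrac{1}{2\pi}\widecheck{f_j}\in C_c^\infty(\R)\otimes\breve X$ (note $T_jf_j\in L^1$ classically, so your tempered-distribution machinery, while correct, is more elaborate than needed).
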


\begin{proof}
  Let $x \in \breve{X}$. By Theorem \ref{theorem:complex} we can find an $f \in \ms{H}^1(\overline{\mathbb{S}};X_0+X_1)$ such that $f(\theta)=x$, $(s,t)\mapsto \widehat{f}_s(t) \in C_c^\infty([0,1]\times \R)\otimes \breve{X}$ and
  \begin{equation*}
    \max_{j = 0,1}\,\nrm{\widehat{f}_j}_{L^{p_j}(\R;X_j)} \lesssim_\theta \nrm{x}_{(X_0,X_1)_{\theta,p_0,p_1}}.
  \end{equation*}
Note that $f \in \ms{H}^1(\mathbb{S})\otimes \breve{X}$, so $h(z) := \hab{\frac{M_0}{M_1}}^{z-\theta} T(z) f(z)$ is well-defined for $z \in \overline{\mathbb{S}}$.  Then $h(\theta) = T(\theta) x$ and by assumptions  \ref{it:stein1},  \ref{it:stein2} and Hadamard's three lines lemma we have $h \in \ms{H}^1(\overline{{\mathbb{S}}};Y_0+Y_1)$. Therefore, using assumption \ref{it:stein2} with $g = \frac{1}{2\pi}\widecheck{f_j}$ and Theorem \ref{theorem:complex}, we have
  \begin{align*}
   \nrm{T_\theta x}_{(Y_0,Y_1)_{\theta,q_0,q_1}} &\leq \max_{j=0,1} \, \nrm{\widehat{h}_j}_{L^{q_j}(\R;Y_j)}\\
    &= \max_{j=0,1}\has{\frac{M_0}{M_1}}^{j-\theta}  \nrmb{ \widehat{T_j f_j} }_{L^{q_j}(\R;Y_j)}\\
    &\leq \frac{1}{2\pi}\max_{j=0,1} \has{\frac{M_0}{M_1}}^{j-\theta} M_j \, \nrm{\widehat{f}_j}_{L^{p_j}(\R;X_j)}\\
    &\lesssim_\theta  M_0^{1-\theta} M_1^\theta\,\nrm{x}_{(X_0,X_1)_{\theta,p_0,p_1}},
  \end{align*}
  which finishes the proof.
\end{proof}

\begin{remark}~
\begin{enumerate}[(i)]
\item When $(p_0,p_1)\neq (\infty,\infty)$ in Theorem \ref{theorem:Steininterpolation1}, we can extend $T(\theta)$ to a bounded operator from $(X_0,X_1)_{\theta,p_0,p_1}$ to $(Y_0,Y_1)_{\theta,q_0,q_1}$ by density. For the case $(p_0,p_1)= (\infty,\infty)$ one could first extend the complex formulation to any $x \in (X_0,X_1)_{\theta,p_0,p_1}$ (see also Remark~\ref{remark:distributions}) to avoid a density argument.
  \item When $p_j=q_j$, the boundedness assumption on $T_j(\cdot)x$ in Theorem \ref{theorem:Steininterpolation1} follows from the assumption that  $T_j$ is an $L^{p_j}(\R;X_j)$-Fourier multiplier for $j=0,1$ (see \cite[Theorem 5.3.15]{HNVW16}).  When $p_j\neq q_j$ one can have Fourier multipliers which are not bounded, see e.g. \cite{Ho60,RV18}. One can relax the boundedness assumption on $T_j(\cdot)x$ in Theorem~\ref{theorem:Steininterpolation1}, as long as one ensures that $h \in \ms{H}^1({{\mathbb{S}}};Y_0+Y_1)$ and $h_s \to h_j$ for $s \to j$ in $L^1(\R;Y_0+Y_1)$ for $j=0,1$.
  \item The implicit constant in the conclusion of Theorem \ref{theorem:Steininterpolation1} is an artefact of the approximation in Lemma \ref{lemma:schwartzenough}. One could circumvent this approximation by going to a suitable class of distributions rather than functions, which we leave to the interested reader.
\end{enumerate}
\end{remark}

\section{Applications}\label{section:examples}
As a first, simple application of Theorem \ref{theorem:Steininterpolation1}, we will deduce the interpolation of weighted $L^p$-spaces under the real interpolation method from the unweighted case. For complex interpolation this argument is standard and for the $\gamma$-interpolation method this can be found in \cite[Theorem 3.2]{SW06}, but for real interpolation one usually employs different arguments (see e.g. \cite[Theorem IV.2.11]{KPS82}).

Let $X$ be a Banach space and let $(S,\mu)$ be a measure space. A measurable function $w\colon S \to (0,\infty)$ is called a weight and for $p \in [1,\infty)$ we define $L^p_w(S;X)$ as the space of all $f \in L^0(S;X)$ such that $fw \in L^p(S;X)$ with norm
\begin{equation*}
  \nrm{f}_{L^p_w(S;X)}:= \nrm{fw}_{L^p(S;X)}.
\end{equation*}

\begin{proposition}\label{proposition:weightedLp}
  Let $(X_0,X_1)$ be an interpolation couple of Banach spaces, let $(S,\mu)$ be a measure space and let $\theta \in (0,1)$. Take $p_0,p_1 \in [1,\infty]$ with $(p_0,p_1) \neq (\infty,\infty)$ and let $w_0,w_1\colon S \to (0,\infty)$ be weights. Set $\frac{1}{p} = \frac{1-\theta}{p_0}+\frac{\theta}{p_1}$ and let $w= w_0^{1-\theta } w_1^{{\theta}}$. Then we have
  \begin{align*}
    (L^{p_0}_{w_0}(S;X_0),L^{p_1}_{w_1}(S;X_1))_{\theta,p} = L^{p}_w(S;(X_0,X_1)_{\theta,p}).
  \end{align*}

\end{proposition}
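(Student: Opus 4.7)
The plan is to derive Proposition~\ref{proposition:weightedLp} from the unweighted identity
\begin{equation*}
(L^{p_0}(S;X_0),L^{p_1}(S;X_1))_{\theta,p} = L^p(S;X_\theta), \qquad X_\theta := (X_0,X_1)_{\theta,p},
\end{equation*}
(a classical result, see e.g.\ \cite{BL76}) by applying Stein interpolation (Theorem~\ref{theorem:Steininterpolationintro}) to an analytic family of multiplication operators built from complex powers of the weights. The analogous reduction for complex interpolation is standard; the new ingredient is that Theorem~\ref{theorem:Steininterpolationintro} now enables exactly the same strategy for the real method once the Fourier multiplier hypothesis is verified.

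Pick a dense subspace $\breve{X}$ of $L^{p_0}_{w_0}(S;X_0) \cap L^{p_1}_{w_1}(S;X_1)$ consisting of bounded $X_0 \cap X_1$-valued simple functions whose supports have finite measure and on which $w_0,w_1$ are bounded away from $0$ and $\infty$; such $\breve{X}$ is then also dense in $L^{p_0}(S;X_0) \cap L^{p_1}(S;X_1)$. For the inclusion $L^p_w(S;X_\theta) \hookrightarrow (L^{p_0}_{w_0}(S;X_0),L^{p_1}_{w_1}(S;X_1))_{\theta,p}$, I would consider the analytic family
\begin{equation*}
T(z) f := w_0^{\,z-1}\, w_1^{-z}\, f, \qquad z \in \overline{\mathbb{S}},
\end{equation*}
which is pointwise-in-$s$ entire in $z$ and which on the line $z = j + it$ acts as an isometry $L^{p_j}(S;X_j) \to L^{p_j}_{w_j}(S;X_j)$, since $|w_0^{\,j+it-1} w_1^{-j-it}| = w_j^{-1}$.

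For the Fourier multiplier hypothesis, a direct computation with Fubini and (scalar) Fourier inversion gives, for $g \in C_c^\infty(\R) \otimes \breve{X}$,
\begin{equation*}
(T_j \widehat{g}\,)^{\vee}(t)(s) = 2\pi\, w_j(s)^{-1}\, g\bigl(t + \log(w_0(s)/w_1(s))\bigr)(s),
\end{equation*}
so that the weight $w_j$ built into the $L^{p_j}_{w_j}$-norm cancels $w_j^{-1}$ and translation invariance of $L^{p_j}(\R)$ yields the estimate with constant $M_j = 2\pi$. Theorem~\ref{theorem:Steininterpolationintro} then extends $T(\theta)$ (which equals multiplication by $w^{-1}$ on $\breve X$) to a bounded operator $L^p(S;X_\theta) \to (L^{p_0}_{w_0}(S;X_0),L^{p_1}_{w_1}(S;X_1))_{\theta,p}$. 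Composing with the isometry $f \mapsto wf$ from $L^p_w(S;X_\theta)$ into $L^p(S;X_\theta)$ and invoking density (which is available because $p<\infty$ under the hypothesis $(p_0,p_1) \neq (\infty,\infty)$) gives one direction of the desired equality.

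The reverse embedding is obtained by the symmetric family $\widetilde T(z) f := w_0^{\,1-z} w_1^{\,z} f$, whose boundary isometries go in the opposite direction, whose Fourier multiplier estimates are proved identically, and which satisfies $\widetilde T(\theta) = w\,\cdot$. The main (modest) obstacle is the Fourier multiplier verification; once the above display is written down it reduces to Fubini and translation invariance of the scalar $L^{p_j}(\R)$-norm, but some auxiliary care is needed to ensure analyticity of $T(\cdot)x$ on $\overline{\mathbb{S}}$ and that $T_j(\cdot)x \in L^\infty(\R;L^{p_j}_{w_j}(S;X_j))$ for $x \in \breve{X}$, both of which follow from the weight-boundedness condition built into $\breve X$.
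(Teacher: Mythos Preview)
Your approach is essentially the same as the paper's: both define an analytic family of multiplication operators by complex powers of the weights, verify the Fourier multiplier hypothesis of Theorem~\ref{theorem:Steininterpolationintro} via Fubini and translation invariance of $L^{p_j}(\R)$, and reduce to the unweighted identity $(L^{p_0}(S;X_0),L^{p_1}(S;X_1))_{\theta,p}=L^p(S;(X_0,X_1)_{\theta,p})$. The only differences are cosmetic: the paper takes $\breve{X}=L^{p_0}_{w_0}(S;X_0)\cap L^{p_1}_{w_1}(S;X_1)$ directly and uses the family $T(z)=w_0^{1-z}w_1^{z}$ (your $\widetilde{T}$) to obtain the embedding ``$\hookrightarrow$'' first, whereas you restrict to a smaller dense class to secure analyticity and start from the inverse family.
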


\begin{proof}
Set $\breve{X} := L^{p_0}_{w_0}(S;X_0) \cap L^{p_1}_{w_1}(S;X_1)$ and for $z \in \overline{\mathbb{S}}$ define $$T(z) \colon \breve{X} \to L^{p_0}(S;X_0) + L^{p_1}(S;X_1)$$ by
  \begin{equation*}
    T(z) x(s) := w_0(s)^{1-z}w_1(s)^{z} \cdot x(s),\qquad s \in S.
  \end{equation*}
  Then $T(\cdot) x \in \ms{H}(\overline{\mathbb{S}};L^{p_0}(S;X_0) + L^{p_1}(S;X_1))$ is bounded for all $x \in \breve{X}$. Using Fubini's theorem, we have for $f \in C_c^\infty(\R)\otimes \breve{X}$
  \begin{align*}
    \nrmb{\ha{T_j\widehat{f}\hspace{2pt}}^{\vee} }_{L^{p_j}(\R;L^{p_j}(S;X_j))} &=
    \nrms{(s,t) \mapsto {f\has{t-\log\has{\frac{w_0(s)}{w_1(s)}},s} }}_{L^{p_j}_{w_j}(S;L^{p_j}(\R;X_j))}\\
    &= \nrm{f}_{L^{p_j}(\R;L_{w_j}^{p_j}(S;X_j))}
  \end{align*}
  for $j=0,1$.
  Therefore, by Theorem \ref{theorem:Steininterpolationintro},
  $$T(\theta) \colon (L^{p_0}_{w_0}(S;X_0),L^{p_1}_{w_1}(S;X_1))_{\theta,p} \to (L^{p_0}(S;X_0),L^{p_1}(S;X_1))_{\theta,p}$$
  is bounded. Since it is clearly injective and
  $$
  (L^{p_0}(S;X_0),L^{p_1}(S;X_1))_{\theta,p} = L^{p}(S;(X_0,X_1)_{\theta,p})
  $$
  by \cite[Theorem 2.2.10]{HNVW16}, this proves the embedding ''$\hookrightarrow$''. The converse embedding is proven similarly.
\end{proof}

In Proposition \ref{proposition:weightedLp} we were able to check the Fourier multiplier assumption in Theorem \ref{theorem:Steininterpolationintro} using Fubini's theorem. In general one needs to use a Fourier multiplier theorem to check this assumption. One could for example use the operator-valued Mihlin multiplier theorem (see \cite[Theorem 5.3.18]{HNVW16}), but this leads to undesirable restrictions on the involved Banach spaces in applications. Fortunately, we can often use the flexibility in our choice for $\cbrace{T(z)}_{z \in \overline{\mathbb{S}}}$ to simplify the Fourier multiplier assumption in Theorem~\ref{theorem:Steininterpolation1} considerably for smooth $T_j$.
As an example we will interpolate the angle of {($\mc{R}$-)sectoriality} of a ($\mc{R}$-)sectorial operator using the real interpolation method, which for the complex interpolation method was done in \cite[Corollary 3.9]{KKW06}. This result will allow us to improve a result of Fackler \cite{Fa13b} on the extrapolation of the analyticity of a semigroup on the real interpolation scale.

Let $X$ be a Banach space, let $A$ be a closed operator on $X$ and for $\sigma \in (0,\infty)$ we define the sector
\begin{equation*}
  \Sigma_\sigma:= \cbraceb{z \in \C\setminus \cbrace{0}:\abs{\arg(z)}<\sigma}.
\end{equation*}
We say that $A$ is \emph{sectorial} if there is a $\sigma \in (0,\pi)$ such that
\begin{equation*}
 \sup_{z \in \C\setminus \overline{\Sigma}_\sigma} \nrm{zR(z,A)} <\infty,
 \end{equation*}
and we denote the infimum over all such $\sigma$ by $\omega(A)$. We say that $A$ is \emph{$\mc{R}$-sectorial} if there is a $\sigma \in (0,\pi)$ such that
\begin{equation*}
 \mc{R}(\cbrace{zR(z,A):z \in \C\setminus \overline{\Sigma}_\sigma}) <\infty,
\end{equation*}
where $\mc{R}(\Gamma)$ denotes the $\mc{R}$-bound of an operator family $\Gamma \subseteq \mc{L}(X)$ (see \cite[Chapter 8]{HNVW17}. We denote the infimum over all such $\sigma$ by $\omega_{\mc{R}}(A)$. For an introduction to ($\mc{R}$-)sectorial operators we refer to \cite[Chapter 10]{HNVW17} and the references therein.

\begin{proposition}\label{proposition:interpolationsectorial}
  Let $(X_0,X_1)$ be an interpolation couple of Banach spaces and $p \in [1,\infty)$. Let $A_\theta$ be a closed operator on $(X_0,X_1)_{\theta,p}$ for all $\theta \in [0,1]$  such that the following consistency assumption is satisfied:
  \begin{equation*}
    R(z,A_\theta)x = R(z,A_{\theta'})x, \qquad x \in X_0\cap X_1, \, \theta,\theta' \in [0,1],\, z \in \rho(A_\theta)\cap \rho(A_{\theta'}).
  \end{equation*}
  For $\theta \in (0,1)$ we have:
  \begin{enumerate}[(i)]
    \item \label{it:sect1} If $A_0$ and $A_1$ are sectorial, then $A_\theta$ is sectorial with $$\omega(A_\theta) \leq (1-\theta)\omega(A_0)+\theta \omega(A_1).$$
    \item \label{it:sect2} If $X_0$ and $X_1$ have nontrivial type and $A_0$ and $A_1$ are $\mc{R}$-sectorial, then $A_\theta$ is $\mc{R}$-sectorial with $$\omega_{\mc{R}}(A_\theta) \leq (1-\theta)\omega_{\mc{R}}(A_0)+\theta \omega_{\mc{R}}(A_1).$$
  \end{enumerate}
\end{proposition}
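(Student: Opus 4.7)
The plan is to adapt the Kalton--Kunstmann--Weis argument for the complex interpolation case \cite[Corollary~3.9]{KKW06}, replacing the complex method by the Lions--Peetre mean method and using Theorem~\ref{theorem:Steininterpolationintro} in place of the Calder\'on product theorem. For part (i) it suffices, by the arbitrariness of $\sigma_j'>\omega(A_j)$, to show that $\C\setminus\overline{\Sigma}_{\sigma_\theta'} \subseteq \rho(A_\theta)$ with a uniform bound on $\nrm{zR(z,A_\theta)}$, where $\sigma_\theta' := (1-\theta)\sigma_0' + \theta\sigma_1'$ and $C_j' := \sup_{\abs{\arg z}>\sigma_j'}\nrm{zR(z,A_j)}_{\mc{L}(X_j)}<\infty$. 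After interchanging $\theta$ with $1-\theta$ if necessary we may assume $\sigma_0'\leq\sigma_1'$, set $\beta := \sigma_1'-\sigma_0'\geq 0$, and fix $z_0\in\C\setminus\overline{\Sigma}_{\sigma_\theta'}$ with $\arg z_0\in(\sigma_\theta',\pi]$ (the mirror case being handled by reflection).

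The core step is to define, for $\zeta\in\overline{\mathbb{S}}$, the analytic family
\[
T(\zeta) := w(\zeta)\,R(w(\zeta),A_0)\,e^{(\zeta-\theta)^2}, \qquad w(\zeta) := z_0\,e^{i\beta(\zeta-\theta)}.
\]
A direct computation using $\arg w(\zeta) = \arg z_0 + \beta(\re(\zeta)-\theta)$ together with $\arg z_0 > \sigma_\theta'$ shows that $\arg w(\zeta)>\sigma_0'$ for every $\zeta\in\overline{\mathbb{S}}$, so $w(\zeta)\in\rho(A_0)$ and $T$ is analytic on an open strip containing $\overline{\mathbb{S}}$. On the boundary $\zeta=j+it$ the argument $\arg w(\zeta) = \arg z_0 + \beta(j-\theta)$ is independent of $t$ and strictly exceeds $\sigma_j'$; the consistency assumption then gives $T(j+it)x = w(j+it)R(w(j+it),A_j)x\cdot e^{(j-\theta+it)^2}$ for $x\in\breve{X}:=X_0\cap X_1$ with uniform bound $\nrm{T(j+it)}_{\mc{L}(X_j)}\leq C_j'\,e^{(j-\theta)^2}e^{-t^2}$. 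In particular $T(\theta) = z_0R(z_0,A_0)$ on $\breve{X}$.

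The Gaussian decay combined with analyticity on a neighborhood of $\overline{\mathbb{S}}$ shows that $T_j\in\ms{S}(\R;\mc{L}(X_j))$, so Amann's smoothness criterion recalled after Theorem~\ref{theorem:Steininterpolationintro} verifies condition~(ii) of Theorem~\ref{theorem:Steininterpolationintro} with $p_j=q_j=p$ and Fourier multiplier norms $M_j\lesssim_{\theta,\beta} C_j'$, uniformly in $z_0$. Theorem~\ref{theorem:Steininterpolationintro} then produces an extension $S_{z_0}$ of $z_0R(z_0,A_0)|_{\breve{X}}$ to a bounded operator on $X_\theta:=(X_0,X_1)_{\theta,p}$ with $\nrm{S_{z_0}}_{\mc{L}(X_\theta)}\lesssim_\theta (C_0')^{1-\theta}(C_1')^\theta$. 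Using the consistency of resolvents and the density of $\breve{X}$ in $X_\theta$ (since $p<\infty$), a short argument identifies $S_{z_0}$ with $z_0R(z_0,A_\theta)$ and shows $z_0\in\rho(A_\theta)$: for $x\in\breve{X}$ the element $S_{z_0}x/z_0 = R(z_0,A_0)x$ lies in $\mathrm{dom}(A_0)$ with $A_0(S_{z_0}x/z_0) = S_{z_0}x-x$, then a limiting argument exploiting the closedness of $A_\theta$ and the boundedness of $S_{z_0}$ extends this to all of $X_\theta$. Letting $\sigma_j'\downarrow\omega(A_j)$ completes~(i).

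For~(ii) the same scheme is run on a Rademacher sum space: given finitely many $z_1,\ldots,z_N\in\C\setminus\overline{\Sigma}_{\sigma_\theta'}$, one defines $\mathbf{T}(\zeta)\sum_n\varepsilon_n x_n := \sum_n\varepsilon_n T_{z_n}(\zeta)x_n$ with each $T_{z_n}$ constructed as above, so that the $\mc{R}$-sectoriality of $A_j$ at angle $\sigma_j'$ translates into a uniform operator bound for $\mathbf{T}(j+it)$ on the Rademacher sum completion of $X_j$. Under the nontrivial type hypothesis, these Rademacher sum spaces interpolate compatibly under the real method (via Kahane--Khintchine and a Fubini-type argument reminiscent of Proposition~\ref{proposition:weightedLp}), and the argument of~(i) then delivers the required $\mc{R}$-bound on $X_\theta$. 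The main anticipated difficulties are the identification of the Stein extension $S_{z_0}$ with the true resolvent $z_0R(z_0,A_\theta)$—requiring careful use of the consistency assumption and closedness of $A_\theta$—and, in~(ii), the compatibility of real interpolation with Rademacher summation, which is exactly where the nontrivial type assumption enters.
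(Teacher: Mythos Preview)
Your proposal is correct and follows essentially the same strategy as the paper: define an analytic family of rotated resolvents multiplied by a Gaussian $e^{(\zeta-\theta)^2}$, verify the Fourier multiplier hypothesis of Theorem~\ref{theorem:Steininterpolationintro} via smoothness and decay, and for~(ii) pass to Rademacher spaces and use $(\varepsilon(X_0),\varepsilon(X_1))_{\theta,p}=\varepsilon((X_0,X_1)_{\theta,p})$ under nontrivial type.

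Two minor remarks. First, the paper parametrises by fixing the modulus $s>0$ and varying the angle, writing $T(z)x=e^{(z-\theta)^2}\,s e^{i(\sigma_0+(\sigma_1-\sigma_0)z)}R(s e^{i(\sigma_0+(\sigma_1-\sigma_0)z)},A_\theta)x$; you instead fix a point $z_0$ and rotate via $w(\zeta)=z_0 e^{i\beta(\zeta-\theta)}$. These are interchangeable. The paper writes the resolvent of $A_\theta$ directly and thus suppresses the identification step you spell out; your treatment of that step (using consistency, density of $\breve X$, and closedness of $A_\theta$ to show $z_0\in\rho(A_\theta)$ and $S_{z_0}=z_0R(z_0,A_\theta)$) is in fact more careful than the paper's. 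Second, your claim $M_j\lesssim_{\theta,\beta} C_j'$ is slightly off: differentiating $t\mapsto w(j+it)R(w(j+it),A_j)$ produces powers of $wR(w,A_j)$, so the bound is polynomial in $C_j'$ (the paper absorbs this into a constant $C_j$). This does not affect the conclusion about $\omega(A_\theta)$.
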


\begin{proof}
Take $\omega(A_j)<\sigma_j<\pi$ for $j=0,1$ and let $\theta \in (0,1)$. Fix $s>0$ and for $z \in \overline{\mathbb{S}}$ define $T(z):X_0\cap X_1 \to X_0+X_1$ by
  \begin{equation*}
    T(z)x := \ee^{(z-\theta)^2}\cdot s \ee^{i(\sigma_0+(\sigma_1-\sigma_0) z)} R(s \ee^{i(\sigma_0+(\sigma_1-\sigma_0) z)},A_\theta)x.
  \end{equation*}
By the consistency assumption and the analyticity of $zR(z,A_j)$ on $X_j$ for $j=0,1$ we have
$T(\cdot)x \in \ms{H}(\overline{\mathbb{S}};X_0+X_1)$ for any $x \in X_0 \cap X_1$. Define $$f_j(t):=s \ee^{i(\sigma_0+(\sigma_1-\sigma_0) (j+it))}, \qquad t \in \R,\,j=0,1.$$ Then we have for $n \in \cbrace{0,1,2}$ and $t\in \R$
\begin{align*}
  \nrms{\frac{\ddn^n}{\ddn t^n} f_j(t) R(f_j(t),A_j)}_{\mc{L}(X_j)} &\leq \abs{\sigma_1-\sigma_0}^n \sum_{k=0}^n \nrms{f_j(t)^k R(f_j(t),A_j)^k}_{\mc{L}(X_j)}\leq C_j
\end{align*}
with $C_j>0$ independent of $s,t,\sigma_0,\sigma_1$.
Therefore, setting $$T_j(t) :=  \ee^{(j+it-\theta)^2}f_j(t) R(f_j(t),A_j), \qquad t \in \R,$$ we obtain
\begin{align*}
  \nrms{t\mapsto \frac{\ddn^n}{\ddn t^n}T_j(t)}_{L^1(\R;\mc{L}(X_j))} &\lesssim  C_j\sum_{k=0}^n \int_{\R} \abs{t}^k \ee^{(j-\theta)^2-t^2} \dd t \lesssim  C_j.
\end{align*}
Defining $k_j:=\widecheck{T_j}$, we have
\begin{align*}
  \nrm{k_j}_{L^1(\R;\mc{L}(X_j))} &\leq \nrmb{t\mapsto \frac{1}{1+t^2}}_{L^1(\R)} \nrmb{t\mapsto (1+t^2)k_j(t)}_{L^\infty(\R;\mc{L}(X_j))}\\
  &\lesssim \nrm{T_j}_{L^1(\R;\mc{L}(X_j))}+ \nrms{t\mapsto \frac{\ddn^2}{\ddn t^2}T_j(t)}_{L^1(\R;\mc{L}(X_j))}\lesssim  C_j.
\end{align*}
We conclude that $T_j,k_j \in L^1(\R;\mc{L}(X_j))$, so we have by Fourier inversion and Young's inequality for $f \in \ms{S}(\R; X_j)$
\begin{align*}
  \nrmb{(T_j\widehat{f}\hspace{2pt})^{\vee}}_{L^{p}(\R;X_j)} =  \nrm{k_j*f}_{L^{p}(\R;X_j)} \lesssim  C_j\, \nrm{f}_{L^{p}(\R;X_j)}.
\end{align*}
Thus, using the consistency assumption, we conclude that $\cbrace{T(z)}_{z \in \overline{\mathbb{S}}}$ satisfies the assumptions of Theorem \ref{theorem:Steininterpolationintro}, so we deduce for $x \in X_0\cap X_1$ that
\begin{align*}
  \nrm{s\ee^{(1-\theta)\sigma_0+\theta \sigma_1}) & R(s\ee^{(1-\theta)\sigma_0+\theta \sigma_1},A_\theta)}_{(X_0,X_1)_{\theta,p}\to (X_0,X_1)_{\theta,p}} \\&= \nrm{T(\theta)}_{(X_0,X_1)_{\theta,p}\to (X_0,X_1)_{\theta,p}} \lesssim_\theta C_0^{1-\theta} C_1^\theta.
\end{align*}
  An analogous statement holds when $\sigma_j$ is replaced by $-\sigma_j$, so we conclude that $A_\theta$ is sectorial with
$$\omega(A_\theta) \leq (1-\theta)\sigma_0+\theta \sigma_1.$$
Taking $\sigma_j$ abitrary close to $\omega(A_j)$ for $j=0,1$ finishes the proof of \ref{it:sect1}.

For \ref{it:sect2} let $X$ be a Banach space and let $(\varepsilon_n)_{n=1}^\infty$ be a sequence of independent Rademacher variables on a probability space $(\Omega,\P)$. Let $\varepsilon(X)$ be the Banach space of sequences $(x_k)_{k =1}^\infty$ such that $\sum_{k=1}^\infty {\varepsilon_kx_k}$ converges in $L^2(\Omega;X)$, endowed with the norm
\begin{equation*}
  \nrm{(x_n)_{k=1}^\infty}_{\varepsilon(X)}:= \nrms{\sum_{k=1}^\infty \varepsilon_kx_k}_{L^2(\Omega;X)}.
\end{equation*}
We note that a closed operator $B$ on $X$ is $\mc{R}$-sectorial if and only if
\begin{align*}
  \widetilde{B}(x_k)_{k=1}^\infty &:= (Bx_k)_{k=1}^\infty\\
  D(\widetilde{B}) &:= \cbraceb{(x_k)_{k=1}^\infty \in \varepsilon(D(B))}
\end{align*}
defines a sectorial operator on $\varepsilon(X)$.
 Moreover we have $\omega_{\mc{R}}(B) = \omega(\widetilde{B}).$
Therefore,  \ref{it:sect2} follows from  \ref{it:sect1} and the isomorphism
\begin{equation*}
  (\varepsilon(X_0),\varepsilon(X_1))_{\theta,p} = \varepsilon((X_0,X_1)_{\theta,p}),
\end{equation*}
see \cite[Proposition 6.3.1, Theorem 7.4.16 and Theorem 7.4.23]{HNVW17}.
\end{proof}

\begin{remark}
In \cite{HHK06} Haak, Haase and Kunstmann interpolate the $\mc{R}$-sectoriality of $A$ on $X_0$ with the $\mc{R}$-boundedness of $\cbrace{R(\lambda,A), \lambda>0}$ on $X_1$. Using Stein interpolation for the complex interpolation method on the analytic operator family $\lambda^z R(\lambda,A)$, one obtains the $\mc{R}$-boundedness of $\cbrace{\lambda^{1-\theta}R(\lambda,A), \lambda>0}$ on $[X_0,X_1]_\theta$. The conditions for Stein interpolation for the complex interpolation method, which are pointwise multiplier conditions, are easily checked using randomization (see \cite[Proposition 6.1.11]{HNVW17}).
 However, when one tries to apply  Theorem \ref{theorem:Steininterpolationintro} for the real interpolation method, the Fourier multiplier conditions turn $\lambda^{it}$ into a translation operator. It is well known that the family of translation operators is not $\mc{R}$-bounded on $L^p(\R;X)$ (see \cite[Proposition 8.1.16]{HNVW17}) and thus the argument fails for real interpolation. This provides some intuition on why the counterexample provided in \cite[Example 6.13]{HHK06} works.
\end{remark}

Let $(S(t))_{t \geq 0}$ be a semigroup of bounded operators on $X$. We say that $(S(t))_{t \geq 0}$ is a \emph{$C_0$-semigroup} if $$\lim_{t \downarrow 0} \nrm{S(t)x-x}_X = 0, \qquad x \in X.$$
We define the generator of a $C_0$-semigroup $(S(t))_{t \geq 0}$ as the closed operator $Ax:= \lim_{t\downarrow 0} \frac{S(t)x-x}{t}$ with as domain the set of all $x \in X$ for which this limit exists.

We say that$(S(t))_{t \geq 0}$ is \emph{analytic} if for some $\sigma \in (0,\frac{\pi}{2})$ the function  $t \mapsto S(t)x$ extends analytically to $\Sigma_\sigma$  for all $x \in X$  and call supremum over such $\sigma$ the angle of analyticity. We say that $(S(t))_{t \geq 0}$ is \emph{$\mc{R}$-analytic} if it is analytic and there is a $\sigma' \in (0,\sigma]$ such that
\begin{equation*}
  \mc{R}(\cbrace{T(z):z \in \Sigma_\sigma,\,\abs{z}\leq 1})<\infty
\end{equation*}
and we call the supremum over such $\sigma$ the angle of $\mc{R}$-analyticity.
We say that $(S(t))_{t \geq 0}$ is a \emph{bounded} ($\mc{R}$-)analytic semigroup if it is ($\mc{R}$-)analytic and $\cbrace{S(z):z \in \Sigma_\sigma}$ is ($\mc{R}$-)bounded for all $\sigma$ smaller than the angle of ($\mc{R}$-) analyticity.


It is well-known that a densely defined ($\mc{R}$-)sectorial operator $A$ on a Banach space $X$ with $\omega(A)<\frac{\pi}{2}$ generates a bounded ($\mc{R}$-)analytic $C_0$-semigroup $(\ee^{-tA})_{t\geq 0}$ of angle $\frac{\pi}{2}-\omega(A)$ and vice versa that the negative generator of a bounded ($\mc{R}$-)analytic $C_0$-semigroup is a densely defined \mbox{($\mc{R}$-)}secto\-rial operator (see \cite[Theorem 4.6]{EN00} and \cite[Proposition 10.3.3]{HNVW17}).
Therefore, Proposition \ref{proposition:interpolationsectorial} yields new information on ($\mc{R}$-)analytic semigroups on the real interpolation scale. In particular, we can quantify the angle of ($\mc{R}$-)analyticity in a result of Fackler \cite{Fa13b} (see also \cite{Fa15b}).

\begin{corollary}\label{corollary:semigroup}
  Let $(X_0,X_1)$ be an interpolation couple of Banach spaces and let $(S(t))_{t \geq 0}$ be a semigroup on $X_0+X_1$ which leaves both $X_1$ and $X_2$ invariant. Fix $\theta \in (0,1)$ and $p \in [1,\infty)$.
  \begin{enumerate}[(i)]
    \item\label{it:semi1} If $(S(t))_{t \geq 0}$ is
   an analytic $C_0$-semigroup of angle $\sigma \in (0,\frac{\pi}{2})$ on $X_0$ and $\cbrace{S(t):t \in (0,1)}$ is uniformly bounded on $X_1$, then $(S(t))_{t \geq 0}$ is an analytic $C_0$-semigroup of angle at least $(1-\theta) \sigma$ on $(X_0,X_1)_{\theta,p}$.
    \item\label{it:semi2}  If $X_0$ and $X_1$ have nontrivial type,  $(S(t))_{t \geq 0}$ is
   a  $\mc{R}$-analytic $C_0$-semigroup of angle $\sigma \in (0,\frac{\pi}{2})$ on $X_0$ and $\cbrace{S(t):t \in (0,1)}$ is $\mc{R}$-bounded on $X_1$, then $(S(t))_{t \geq 0}$ is an $\mc{R}$-analytic $C_0$-semigroup of angle at least $(1-\theta) \sigma$ on $(X_0,X_1)_{\theta,p}$.
  \end{enumerate}
\end{corollary}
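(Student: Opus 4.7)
My plan is to reduce both parts to Proposition \ref{proposition:interpolationsectorial} via the correspondence, recalled right above the corollary, between bounded ($\mc{R}$-)analytic $C_0$-semigroups and densely defined ($\mc{R}$-)sectorial operators of angle $<\pi/2$. The first move is a harmless rescaling: the uniform ($\mc{R}$-)boundedness of $\{S(t):t\in(0,1)\}$ on $X_1$ combined with the semigroup property yields uniform ($\mc{R}$-)boundedness of $\{\ee^{-\omega t}S(t):t\geq 0\}$ on $X_1$ for some $\omega\geq 0$, so after replacing $S(t)$ by $\ee^{-\omega t}S(t)$ we may assume $\{S(t):t\geq 0\}$ itself is uniformly ($\mc{R}$-)bounded on $X_1$. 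Since this shift only translates the negative generator by $-\omega$ and leaves the angle of ($\mc{R}$-)analyticity unaffected, undoing it at the end recovers the original claim.

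Next, I would attach an ($\mc{R}$-)sectorial operator to $S$ on each endpoint space. On $X_0$ the negative generator $A_0$ of $S$ is, by the cited correspondence, densely defined and ($\mc{R}$-)sectorial with $\omega_{(\mc{R})}(A_0)\leq \pi/2-\sigma$. On $X_1$ the subtlety is that $S$ is not assumed strongly continuous, so I would define $A_1$ via its resolvent by setting
\[
R_1(\lambda)x:=\int_0^\infty \ee^{-\lambda t}S(t)x\ud t, \qquad \re\lambda>0,\,x\in X_1,
\]
understood as a Bochner integral in $X_1$. Absolute convergence follows from the uniform bound, and strong measurability can be verified first for $x\in X_0\cap X_1$, where $t\mapsto S(t)x$ is continuous in $X_0$ with values in $X_1$ (a closed subspace of $X_0+X_1$), and then transferred to all $x\in X_1$. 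Standard Laplace transform manipulations show that $R_1$ satisfies the resolvent identity and coincides with $R(\lambda,A_0)$ on $X_0\cap X_1$, and the injectivity of $R_1(\lambda)$ then yields a unique closed operator $A_1$ on $X_1$ with $R(\lambda,A_1)=R_1(\lambda)$. The uniform ($\mc{R}$-)boundedness of $\{S(t):t\geq 0\}$ translates into $\omega_{(\mc{R})}(A_1)\leq \pi/2$ via the convex-combination representation $\lambda R_1(\lambda)=\int_0^\infty \lambda\ee^{-\lambda t}S(t)\ud t$ for $\lambda>0$ and analytic continuation to $\re\lambda>0$.

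Finally, defining $A_\eta$ on $(X_0,X_1)_{\eta,p}$ for $\eta\in[0,1]$ through the restriction of $R(\lambda,A_0)$ (equivalently $R(\lambda,A_1)$) --- which maps $(X_0,X_1)_{\eta,p}$ into itself boundedly by real interpolation applied to the endpoint bounds --- produces a consistent family of closed operators. Proposition \ref{proposition:interpolationsectorial} then yields
\[
\omega_{(\mc{R})}(A_\theta)\leq (1-\theta)\omega_{(\mc{R})}(A_0)+\theta\omega_{(\mc{R})}(A_1)\leq \pi/2-(1-\theta)\sigma.
\]
Because $p<\infty$, $X_0\cap X_1$ is dense in $(X_0,X_1)_{\theta,p}$, so $A_\theta$ is densely defined, and the cited correspondence provides a bounded ($\mc{R}$-)analytic $C_0$-semigroup on $(X_0,X_1)_{\theta,p}$ of angle at least $(1-\theta)\sigma$ generated by $-A_\theta$. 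Uniqueness of generators together with resolvent consistency identify this semigroup with the restriction of $S$ to $(X_0,X_1)_{\theta,p}$, and reversing the initial rescaling completes the proof of both \ref{it:semi1} and \ref{it:semi2}. The main obstacle is the rigorous construction and ($\mc{R}$-)sectoriality of $A_1$ in the absence of a strong continuity hypothesis for $S$ on $X_1$; the remaining steps are essentially routine applications of Proposition \ref{proposition:interpolationsectorial}, the semigroup-sectorial correspondence, and density of $X_0\cap X_1$ in the real interpolation space.
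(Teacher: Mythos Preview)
Your overall strategy---reduce to Proposition~\ref{proposition:interpolationsectorial} via the correspondence between bounded ($\mc{R}$-)analytic $C_0$-semigroups and ($\mc{R}$-)sectorial generators---matches the paper's. The essential difference is how the endpoint $X_1$ is handled, where $S$ is not assumed strongly continuous. You attempt to construct the resolvent on $X_1$ directly via the Laplace transform, and you correctly flag this as the main obstacle. The paper sidesteps the issue entirely: it replaces $X_1$ by an intermediate space $Y_1:=(X_0,X_1)_{\eta,p}$ with $\eta\in(\theta,1)$, invokes Fackler's extrapolation theorem \cite[Theorem~3.3]{Fa13b} to conclude that $S$ is already an analytic $C_0$-semigroup on $Y_1$, applies Proposition~\ref{proposition:interpolationsectorial} to the couple $(X_0,Y_1)$, and then recovers $(X_0,X_1)_{\theta,p}=(X_0,Y_1)_{\theta/\eta,p}$ by reiteration, letting $\eta\to 1$ at the end.

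The gap in your route is the strong measurability of $t\mapsto S(t)x$ in $X_1$. Continuity in $X_0$ (or in $X_0+X_1$) of a function taking values in $X_1$ does not imply Bochner measurability in the finer $X_1$-topology: Pettis' criterion requires the range to be essentially separable in $X_1$, and this is not automatic here. Without it, the Bochner integral defining $R_1(\lambda)$ in $X_1$ is not justified, and the subsequent transfer to all $x\in X_1$ also breaks down (density of $X_0\cap X_1$ in $X_1$ is not assumed, and even if it were, the passage would require an a priori $X_1$-bound on $R_1(\lambda)$ that you have not yet established). This is precisely the difficulty that Fackler's result resolves, and the paper uses it as a black box rather than reproving it. The remaining ingredients in your plan---the rescaling, the resolvent consistency, the density of $X_0\cap X_1$ in $(X_0,X_1)_{\theta,p}$ for $p<\infty$, and the identification of the resulting semigroup---are sound and parallel the paper's argument.
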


\begin{proof}
Take $\eta \in (\theta,1)$ and set $Y_1 := (X_0,X_1)_{\eta,p}$. By \cite[Theorem 3.3]{Fa13b} we know that $(S(t))_{t \geq 0}$ is an analytic $C_0$-semigroup on $Y_1$.
 Let $\omega>0$ be such that $\widetilde{S}(t):= \ee^{-\omega t}S(t)$ is a bounded analytic $C_0$-semigroup on both $X_0$ and $Y_1$ and note that the angles of analyticity of $(S(t))_{t \geq 0}$ and $(\widetilde{S}(t))_{t \geq 0}$ on $X_0$ and $Y_1$ are equal. Let $A_0$ and $A_1$ be the generators of $(\widetilde{S}(t))_{t \geq 0}$ on $X_0$ and $Y_1$ respectively, which by \cite[Theorem 4.6]{EN00} are sectorial operators with
 $$
 \omega(A_j) \leq \frac{\pi}{2}-(1-j)\sigma, \qquad j=0,1.
 $$
For  $\theta' \in (0,1)$ we note that $(\widetilde{S}(t))_{t \geq 0}$ is a $C_0$-semigroup on $(X_0,Y_1)_{\theta',p}$ and we denote its generator by $A_{\theta'}$. Then $(A_{\theta'})_{{\theta'} \in [0,1]}$ satisfies the assumptions of Proposition \ref{proposition:interpolationsectorial}, so we obtain that $A_{\theta'}$ is sectorial on $(X_0,Y_1)_{\theta',p}$ with
$$
\omega(A_{\theta'}) \leq \frac{\pi}{2} - (1-\theta')\sigma
$$
 and thus by \cite[Theorem 4.6]{EN00} we know that $(\widetilde{S}(t))_{t \geq 0}$ is a bounded analytic $C_0$-semigroup on $(X_0,Y_1)_{\theta',p}$ of angle at least $(1-\theta')\sigma$. It follows that $({S}(t))_{t \geq 0}$ is an analytic $C_0$-semigroup on $(X_0,Y_1)_{\theta',p}$ of angle at least $(1-\theta')\sigma$ and thus, using reiteration for real interpolation \cite[Theorem 3.5.3]{BL76} and taking $\eta$ arbitrary close to $1$, \ref{it:semi1} follows.  Moreover \ref{it:semi2} follows from \ref{it:semi1} by a similar argument as the one we used for Proposition \ref{proposition:interpolationsectorial}\ref{it:sect2}
\end{proof}

\subsection*{Acknowledgements}
The authors wish to thank Mark Veraar for his suggestion to include the application to ($\mc{R}$-)sectorial operators, \'Oscar Dom\'inguez Bonilla for his helpful references to the literature and Jan Rozendaal for his remarks on the introduction.

\bibliographystyle{alpha}
\bibliography{Steinbib}

\begin{thebibliography}{HNVW17}

\bibitem[Ama97]{Am97}
Herbert Amann.
\newblock Operator-valued {F}ourier multipliers, vector-valued {B}esov spaces,
  and applications.
\newblock {\em Math. Nachr.}, 186:5--56, 1997.

\bibitem[BK07]{BK07}
A.~Bakan and S.~Kaijser.
\newblock Hardy spaces for the strip.
\newblock {\em J. Math. Anal. Appl.}, 333(1):347--364, 2007.

\bibitem[BL76]{BL76}
J.~Bergh and J.~L\"{o}fstr\"{o}m.
\newblock {\em Interpolation spaces. {A}n introduction}.
\newblock Springer-Verlag, Berlin-New York, 1976.
\newblock Grundlehren der Mathematischen Wissenschaften, No. 223.

\bibitem[Cal64]{Ca64}
A.P. Calder{\'o}n.
\newblock Intermediate spaces and interpolation, the complex method.
\newblock {\em Studia Math.}, 24:113--190, 1964.

\bibitem[CJ84]{CJ84}
M.~Cwikel and S.~Janson.
\newblock Interpolation of analytic families of operators.
\newblock {\em Studia Math.}, 79(1):61--71, 1984.

\bibitem[CKMR02]{CKMR02}
M.~Cwikel, N.~Kalton, M.~Milman, and R.~Rochberg.
\newblock A unified theory of commutator estimates for a class of interpolation
  methods.
\newblock {\em Adv. Math.}, 169(2):241--312, 2002.

\bibitem[CS88]{CS88}
M.~Cwikel and Y.~Sagher.
\newblock Analytic families of operators on some quasi-{B}anach spaces.
\newblock {\em Proc. Amer. Math. Soc.}, 102(4):979--984, 1988.

\bibitem[Cwi78]{Cw78}
M.~Cwikel.
\newblock Complex interpolation spaces, a discrete definition and reiteration.
\newblock {\em Indiana Univ. Math. J.}, 27(6):1005--1009, 1978.

\bibitem[EN00]{EN00}
K.-J. Engel and R.~Nagel.
\newblock {\em One-parameter semigroups for linear evolution equations}, volume
  194 of {\em Graduate Texts in Mathematics}.
\newblock Springer-Verlag, New York, 2000.

\bibitem[Fac13]{Fa13b}
S.~Fackler.
\newblock Regularity of semigroups via the asymptotic behaviour at zero.
\newblock {\em Semigroup Forum}, 87(1):1--17, 2013.

\bibitem[Fac15]{Fa15b}
S.~Fackler.
\newblock {\em Regularity properties of sectorial operators: extrapolation,
  counterexamples and generic classes}.
\newblock PhD thesis, Ulm University, 2015.

\bibitem[HHK06]{HHK06}
B.H. Haak, M.~Haase, and P.C. Kunstmann.
\newblock Perturbation, interpolation, and maximal regularity.
\newblock {\em Adv. Differential Equations}, 11(2):201--240, 2006.

\bibitem[HNVW16]{HNVW16}
T.P. Hyt\"onen, J.M.A.M.~van Neerven, M.C. Veraar, and L.~Weis.
\newblock {\em Analysis in {B}anach Spaces. {V}olume {I}: {M}artingales and
  {L}ittlewood-{P}aley Theory}, volume~63 of {\em Ergebnisse der Mathematik und
  ihrer Grenzgebiete.}
\newblock Springer, 2016.

\bibitem[HNVW17]{HNVW17}
T.P. Hyt\"onen, J.M.A.M.~van Neerven, M.C. Veraar, and L.~Weis.
\newblock {\em Analysis in {B}anach spaces. {V}olume {II}: Probabilistic
  methods and operator theory}, volume~67 of {\em Ergebnisse der Mathematik und
  ihrer Grenzgebiete.}
\newblock Springer, 2017.

\bibitem[H{\"{o}}r60]{Ho60}
L.~H{\"{o}}rmander.
\newblock Estimates for translation invariant operators in {$L^{p}$}\ spaces.
\newblock {\em Acta Math.}, 104:93--140, 1960.

\bibitem[KKW06]{KKW06}
N.~Kalton, P.~Kunstmann, and L.~Weis.
\newblock Perturbation and interpolation theorems for the {$H^\infty$}-calculus
  with applications to differential operators.
\newblock {\em Math. Ann.}, 336(4):747--801, 2006.

\bibitem[KLW21]{KLW19}
N.J. Kalton, E.~Lorist, and L.~Weis.
\newblock Euclidean structures and operator theory in {B}anach spaces.
\newblock \textit{To Appear in Mem. Amer. Math. Soc.} arXiv:1912.09347, 2021.

\bibitem[KPS82]{KPS82}
S.G. Kre\u{\i}n, Yu.\={I}. Petun\={\i}n, and E.M. Sem\"{e}nov.
\newblock {\em Interpolation of linear operators}, volume~54 of {\em
  Translations of Mathematical Monographs}.
\newblock American Mathematical Society, Providence, R.I., 1982.

\bibitem[LL21a]{LL21b}
N.~Lindemulder and E.~Lorist.
\newblock A continuous framework for the interpolation of {B}anach spaces.
\newblock In preparation, 2021.

\bibitem[LL21b]{LL21a}
N.~Lindemulder and E.~Lorist.
\newblock A discrete framework for the interpolation of {B}anach spaces.
\newblock arXiv:2105.08373, 2021.

\bibitem[LP64]{LP64}
J.-L. Lions and J.~Peetre.
\newblock Sur une classe d'espaces d'interpolation.
\newblock {\em Inst. Hautes \'{E}tudes Sci. Publ. Math.}, 19(1):5--68, 1964.

\bibitem[Pee71]{Pe71}
J.~Peetre.
\newblock Sur l'utilisation des suites inconditionellement sommables dans la
  th\'{e}orie des espaces d'interpolation.
\newblock {\em Rend. Sem. Mat. Univ. Padova}, 46:173--190, 1971.

\bibitem[Pis16]{Pi16}
G.~Pisier.
\newblock {\em Martingales in {B}anach spaces}, volume 155 of {\em Cambridge
  Studies in Advanced Mathematics}.
\newblock Cambridge University Press, Cambridge, 2016.

\bibitem[RV18]{RV18}
J.~Rozendaal and M.C. Veraar.
\newblock Fourier multiplier theorems involving type and cotype.
\newblock {\em J. Fourier Anal. Appl.}, 24(2):583--619, 2018.

\bibitem[Ste56]{St56}
E.M. Stein.
\newblock Interpolation of linear operators.
\newblock {\em Trans. Amer. Math. Soc.}, 83:482--492, 1956.

\bibitem[SW06]{SW06}
J.~Su\'arez and L.~Weis.
\newblock Interpolation of {B}anach spaces by the {$\gamma$}-method.
\newblock In {\em Methods in {B}anach space theory}, volume 337 of {\em London
  Math. Soc. Lecture Note Ser.}, pages 293--306. Cambridge Univ. Press,
  Cambridge, 2006.

\bibitem[Tri78]{Tr78}
H.~Triebel.
\newblock {\em Interpolation theory, function spaces, differential operators},
  volume~18 of {\em North-Holland Mathematical Library}.
\newblock North-Holland Publishing Co., Amsterdam-New York, 1978.

\bibitem[Voi92]{Vo92}
J.~Voigt.
\newblock Abstract {S}tein interpolation.
\newblock {\em Math. Nachr.}, 157:197--199, 1992.

\bibitem[Zaf80]{Za80}
M.~Zafran.
\newblock Spectral theory and interpolation of operators.
\newblock {\em J. Functional Analysis}, 36(2):185--204, 1980.

\end{thebibliography}

\end{document}